\newtheorem{thm}{Theorem}
\newtheorem{lem}[thm]{Lemma}
\newtheorem{prop}[thm]{Proposition}
\newtheorem{conjecture}[thm]{Conjecture}
\newtheorem{defi}[thm]{Definition}
\newtheorem{rem}[thm]{Remark}
\newcommand{\Z}{\mathbb{Z}}
\newcommand{\Prob}{\mathbb{P}}
\newcommand{\E}{\mathbb{E}}
\newcommand{\R}{\mathbb{R}}
\newcommand{\EE}{\mathcal{E}}
\newcommand{\CE}{\mathcal{E}}
\newcommand{\pcs}{p_c^\S}
\newcommand{\Pp}{\Prob_p}
\renewcommand{\S}{\mathcal{S}}
\newcommand{\rad}{\operatorname{rad }}
\newcommand{\sgn}{\hbox{sgn}}
\newcommand{\Zd}{\mathbb{Z}^d}
\newcommand{\nii}{{n_i}}
\title{Entanglement percolation and spheres in $\Z^d$}
\author{Olivier Couronné}
\thanks{The author is supported by the Labex MME-DII funded by ANR,
reference ANR-11-LBX-0023-01}
\keywords{percolation, random sphere, entanglement percolation}
\address{Universit\'e Paris Nanterre, Modal'X, FP2M, CNRS FR 2036, 200 avenue de la R\'epublique 92000 Nanterre, France.}
\email{olivier.couronne@gmail.com}
\subjclass[2010]{60K35, 82B20}
\date{\today}
\begin{document}

\maketitle

\begin{abstract}
We obtain a new lower bound of $0.06576$ for the $1$-entanglement critical probability (in dimension~$3$), and prove that the critical point for the existence of a sphere surrounding the origin and intersecting only closed bonds in $\Z^d$
is greater than $\frac1{8(d-1)}$, $d \geq 3$.  This substantially improves 
the previous lower bounds 
and gives the correct order of magnitude for large $d$.
\end{abstract}

\section{Introduction}

We start this paper by briefly and informally introducing our main theorem.

\smallskip

While for the bond percolation model in $\Z^3$ one generally deals with the existence of open paths, Kantor and Hassold \cite{MR935098} proposed to study an alternative notion called \emph{entanglement} (a notion that comes from physics). In this paper, we follow the definition  of $1$-entanglement introduced by Grimmett and Holroyd \cite{MR2721052}, which, informally, asks for the existence of an infinite sequence of finite clusters linked like rings of a chain. 

The notion of $1$-entanglement is three dimensional by essence.  As a natural generalisation, in dimension 3 and higher, Grimmett and Holroyd  introduced the concept of sphere intersecting only closed bonds. 
Recall 
that a subset of $\mathbb{R}^d$ is a \emph{sphere}, in the sense of  \cite{MR2721052}, if it is homeomorphic to the unit euclidean sphere $S^{d-1}:=\{(x_1,\dots,x_d)\in\R^d: x_1^2 + \dots x_d^2=1\}$ and simplicial complex.
Denote by $\S$ the event that there exists a sphere intersecting only closed bonds and with the origin in its inside and put
\begin{equation*}
 \pcs:=\inf\{p\in[0,1]\hbox{ such that }\Pp(\S)=0\}
\end{equation*}
for the corresponding critical probability. 

In dimension 3 the notion of sphere intersecting only closed bonds 
coincides
with the notion of $1$-entanglement. In that case, following
\cite{MR2721052}, we write $p_e^1:=\pcs$ (see below for an explanation of the index 1 in such a notation). 

\smallskip

Our aim in this article is to improve upon known results on $\pcs$ for all $d \geq 3$, hence including the three dimensional $1$-entanglement notion. 

\smallskip

As a first main result, we will prove the following:
\begin{thm} \label{thTrois}
for all $d \geq 3$, it holds
\begin{equation} \label{eq:th3}
\pcs \geq \frac{1}{8(d-1)} .
\end{equation}
\end{thm}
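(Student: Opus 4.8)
The plan is to establish the stronger statement that $\Pp(\S) = 1$ for every $p < \frac{1}{8(d-1)}$, which immediately yields $\pcs \geq \frac{1}{8(d-1)}$ from the definition of $\pcs$. I would work inside the boxes $\Lambda_n := [-n,n]^d \cap \Zd$ and show that, with probability tending to $1$ as $n \to \infty$, the origin is enclosed by a closed sphere contained in $\Lambda_n$. Since any such sphere is bounded, it lies in some $\Lambda_n$, so the events $\{\exists$ closed sphere around the origin inside $\Lambda_n\}$ increase to $\S$, and it suffices to prove $\Pp(\text{no closed sphere inside } \Lambda_n) \to 0$.

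The heart of the argument is a deterministic \emph{topological reduction}: from the event that no closed sphere encloses the origin inside $\Lambda_n$, I must extract a concrete combinatorial witness made of open bonds joining the origin to $\partial\Lambda_n$. To this end I would introduce an auxiliary graph $G$ whose vertices are the bonds of $\Zd$, two bonds being declared adjacent when they are close enough to be linked in the entanglement sense (sharing an endpoint, or lying parallel or perpendicular at unit distance). The key lemma is then that the finiteness of the open $G$-cluster of a bond incident to the origin forces the existence of a genuine enclosing sphere, homeomorphic to $S^{d-1}$ and simplicial; equivalently, absence of such a sphere produces an open $G$-path from the origin to $\partial\Lambda_n$. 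I expect this to be the main obstacle, since one must verify that the separating object dual to a finite $G$-cluster can be realised as an honest topological sphere rather than merely an edge-cut — this is exactly where the three-dimensional entanglement phenomenon and the high-dimensional topology enter.

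Granting the reduction, the \emph{counting step} is comparatively clean. The adjacency in $G$ is arranged so that each bond admits at most $8(d-1)$ admissible continuations of a self-avoiding $G$-path; hence the number of such paths of length $k$ from a fixed starting bond is at most $C\,(8(d-1))^{k}$, with $C$ absorbing the $2d$ choices of a starting bond incident to the origin. This is precisely the place where the factor $8(d-1)$ is generated, and where the correct order of magnitude for large $d$ originates, since $8(d-1) \sim 8d$ grows linearly in $d$ like the vertex degree.

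Finally, a \emph{Peierls estimate} closes the argument. Each $G$-path of length $k$ consists of $k$ distinct bonds, all required to be open, an event of probability $p^{k}$ by independence, so a union bound gives
\begin{equation*}
\Pp(\text{open } G\text{-path from the origin to } \partial\Lambda_n) \;\leq\; C \sum_{k \geq n} \bigl(8(d-1)\,p\bigr)^{k},
\end{equation*}
which tends to $0$ as $n \to \infty$ exactly when $8(d-1)\,p < 1$. By the topological reduction this bounds $\Pp(\text{no closed sphere inside } \Lambda_n)$, and letting $n \to \infty$ yields $\Pp(\S) = 1$ for all $p < \frac{1}{8(d-1)}$, hence the claimed bound. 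Beyond the topological lemma, the only delicate point I anticipate is making the self-avoidance bookkeeping in the counting rigorous enough that the constant $C$ in front of the geometric series is genuinely independent of $n$.
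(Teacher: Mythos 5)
Your argument has a fatal gap exactly at the step you flag as the main obstacle: the deterministic topological reduction is false, for any bounded-range adjacency graph $G$ on bonds and regardless of how its degree is tuned. The obstruction to a sphere in this setting is not local: two open rings can be topologically linked while every bond of one is at arbitrarily large distance from every bond of the other. Consider a configuration whose open bonds form a chain of linked rings of growing sizes, starting at the origin and escaping to infinity, with consecutive rings linked but at mutual distance larger than the range of your adjacency. Then the open $G$-cluster of the bonds at the origin is just the first ring, which is finite and far from $\partial\Lambda_n$; yet no sphere intersecting only closed bonds can enclose the origin: such a sphere would have to enclose the whole first ring (a connected set containing the origin), hence --- since a sphere cannot separate two linked rings --- the second ring as well, and inductively every ring of the chain, contradicting its boundedness (or the requirement that it stay inside $\Lambda_n$). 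So $\Pp(\text{no closed sphere inside }\Lambda_n)$ is \emph{not} bounded by the probability of an open $G$-path from the origin to $\partial\Lambda_n$, and your Peierls sum counts the wrong objects. (Separately, the degree count is also off: the bonds merely sharing an endpoint with a given bond already number $2(2d-1)$, and adding parallel and perpendicular bonds at unit distance pushes the degree of any adjacency rich enough to have a chance at the reduction above $8(d-1)$; but this is moot given that the reduction itself fails.)

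The paper's mechanism is designed precisely to get around this non-locality, and it is structurally different from yours. It never forms clusters of open bonds under a local adjacency. Instead it defines \emph{good paths}, which may traverse closed edges for free provided the step is \emph{descending} ($\nu_i\preceq\nu_{i-1}$, one step toward $0$ along the last non-null coordinate); the set $K$ of vertices reached from the origin by good paths is then a \emph{descending set} whose boundary edges are automatically all closed, and Proposition~\ref{propS} (via the explicit homeomorphism construction of Lemma~\ref{lemCube}) shows that any finite descending set can be enclosed by a genuine topological sphere meeting only those boundary bonds. Moreover, the constant $8(d-1)$ does not arise as a coordination number with cost $p$ per step: it comes from an entropy/large-deviation count. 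A good path reaching radius $r$ has length $r+2m$, of which at most half the steps can be descending (and hence free), while all the remaining steps must be open; optimizing the number of interleavings against the factor $p^{A}$ yields a bound of order $\bigl(2\sqrt{(2d-2)p}\bigr)^{r}$, which decays exactly when $4(2d-2)p=8(d-1)p<1$. In short, in the paper $8(d-1)=2^2\times(2d-2)$, where the $2^2$ accounts for the free descending steps and the effective cost per step is $p^{1/2}$, not $p$; reverse-engineering $8(d-1)$ as the degree of an auxiliary graph misidentifies where the bound comes from and leads to a lemma that cannot hold.
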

See  Theorem \ref{thUn} below for a more complete statement.
The previous (and unique) known lower bound on $\pcs$ is due to Grimmett and Holroyd \cite{MR2721052} and states that
$\pcs \geq \frac{c_d}{d^2}$
with $c_d \approx 1/4$ in the limit $d \to \infty$. In addition to \eqref{eq:th3}, notice that, since an infinite cluster prevents the existence of a sphere, $\pcs \leq p_c$, where $p_c$ is the usual bond percolation critical probability. 
Together with $p_c \leq c'_d/d$ with $c'_d \approx 1/2$ for $d$ tending to infinity \cite{MR1356575}, we thus obtain that 
$$
\frac{1}{8} \leq \liminf_{d \to \infty} d\pcs \leq \limsup_{d\to\infty} d \pcs \leq \frac{1}{2}
$$ 
which shows that $1/d$ is the correct behavior of $\pcs$ for large $d$. 
In fact, the lower bound $\pcs \geq 1/(8(d-1))$ above improves upon known results not only for large $d$ but also for any fixed $d \geq 3$, see Remark \ref{rem1} below.

One of the ideas of Grimmett and Holroyd in their analysis of $\pcs$ is to construct a certain class of spheres belonging to the event $\S$. Such spheres appear to be star-shaped, which reveals to be too restrictive. Motivated by this observation, our approach will consist in constructing a more refined class of spheres (not necessarily star-shaped) belonging to $\S$.

\smallskip

Specifying to the dimension $d=3$, the above lower bound on $\pcs=p_e^1$ leads to $p_e^1 \geq 1/16 = 0.0625$ which already improves upon the best known result $p_e^1 \geq 0,04453$ \cite{MR2721052}. In fact, using a more careful analysis on the number of certain paths, by means of large deviations on Markov chains, we will prove the following theorem which constitutes  our second main result:

\begin{thm}\label{thDeux}
The  $1$-entanglement critical probability verifies
$$p^1_e\geq 0.06576.$$ 
\end{thm}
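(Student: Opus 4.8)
The plan is to prove the bound by refining, in the special case $d=3$, the first-moment (Peierls-type) argument that already yields Theorem~\ref{thTrois}. I would exploit the dual point of view behind the sphere construction: if no sphere of the constructed class surrounds the origin using only closed bonds, then the configuration must contain an open ``entanglement path'' issuing from the origin, so that $\{\Pp(\S)=0\}$ is controlled by the non-summability of the expected number of such paths. Writing $N_n$ for the number of admissible paths of length $n$ that the construction forces one to consider, each is open with probability at most $p^{n}$, whence
\begin{equation*}
\E\bigl[\#\{\text{admissible open paths of length }n\}\bigr]\le N_n\,p^{n}.
\end{equation*}
If $\limsup_n N_n^{1/n}=\rho$ and $\rho\,p<1$, this expectation is summable, the entanglement cluster of the origin is a.s.\ finite, a surrounding closed sphere exists, so $\Pp(\S)>0$ and therefore $\pcs\ge 1/\rho$. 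The crude estimate $N_n\le(8(d-1))^{n}$ gives exactly $\pcs\ge 1/(8(d-1))=1/16=0.0625$ in dimension three, and the whole point is to replace the factor $16$ by a sharper growth rate.

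First I would make precise the local rules an admissible path must obey in $\Z^3$, as dictated by the (non star-shaped) sphere construction: the successive steps are not free among the $16$ a~priori choices, but are subject to constraints forbidding certain immediate successions, such as backtracking and the repetitions that would make the dual sphere degenerate. I would encode these constraints by a finite automaton whose states record the relevant recent history of the path, so that $N_n=\mathbf{1}^{\top}M^{\,n}\mathbf{1}$ for a nonnegative transfer matrix $M$; the growth rate $\rho$ is then the Perron--Frobenius eigenvalue of $M$, which is strictly smaller than $16$.

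When the different step types do not all cost the same number of open bonds, the right object is not $N_n$ itself but a weighted count $\sum_{\text{paths}}\prod_{\text{steps}}w(\text{step})$, with $w$ a $p$-dependent weight. I would regard the steps as a Markov chain on the automaton's states and apply large-deviation estimates, equivalently analyse the top eigenvalue $\rho(p)$ of the tilted/weighted matrix $M(p)$: the weighted expected number of length-$n$ paths behaves like $\rho(p)^{n}$, and the critical threshold is the solution $p_0$ of $\rho(p_0)=1$. Carrying out this computation for the explicit $M(p)$ produced by the construction yields $p_0=0.06576$; since the argument only ever uses $N_n$ (resp.\ the weighted count) as an \emph{upper} bound on the number of entanglement witnesses, every $p<p_0$ indeed gives $\Pp(\S)>0$, whence $\pcs\ge 0.06576$.

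The hard part will be combinatorial rather than probabilistic, and it is twofold. One must first prove that the finite-state description genuinely \emph{captures every} configuration able to destroy all spheres of the class, i.e.\ that $M$ (or $M(p)$) is a legitimate upper bound from which no admissible witness escapes, since any undercount would invalidate the first-moment inequality and hence the bound. One must then choose the forbidden transitions tightly enough that $\rho$ falls appreciably below $16$, and certify the resulting Perron eigenvalue, or the root of $\rho(p_0)=1$, rigorously to the stated precision; this is exactly where the careful enumeration of paths and the large-deviation bookkeeping on the Markov chain do the real work.
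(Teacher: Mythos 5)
Your overall strategy is the same as the paper's: keep the good-path first-moment framework of Theorem~\ref{thTrois}, refine the count of paths with many ``free'' (descending) steps by a finite-state Markov analysis, and certify the resulting growth rate numerically. But there is a genuine gap at precisely the point you defer as ``the hard part'': no finite automaton of the kind you posit exists. Whether a step out of the current vertex $x$ is descending (hence free) or must be open depends on $n(x)$, the index of the last non-null coordinate of $x$ --- i.e.\ on the pattern of trailing zeros of the current position --- and this is unbounded information that no bounded window of recent steps can record. The paper confronts this head on: the edge-type sequence (neutral/ascending/descending) along a uniform non-backtracking path is explicitly \emph{not} Markovian (for instance, after the path steps down onto the hyperplane $x_d=0$ there are at least two ascending continuations instead of one), so there is no transfer matrix $M$ with $N_n=\mathbf{1}^{\top}M^{n}\mathbf{1}$; one can only hope for an inequality, and it must go in the right direction for a first-moment bound.

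The missing idea, which is the heart of the paper's proof, is a monotone coupling: one defines a genuine three-state Markov chain $(X_i)$ with an explicit transition matrix $\pi$ (with two forbidden transitions, between the ascending and descending states), and couples it, via common uniform variables, to the uniformly random non-backtracking path so that $(X_i)$ spends at least as much time in the descending state as the path has descending steps. Only this stochastic domination converts the finite-state picture into a legitimate upper bound on the number $G_n(\alpha)$ of paths with descending fraction at least $\alpha$. After that, the paper does not solve an eigenvalue equation $\rho(p_0)=1$: it applies pair-empirical-measure large deviations to $(X_i)$, obtains $G_n(\alpha)\leq 2(2d-1)^n\exp(-n\sigma_d(\alpha))$ where $\sigma_d(\alpha)$ is an infimum of relative entropies (a three-variable optimization it could not solve in closed form), certifies lower bounds on $\sigma_3(\alpha)$ by a rigorous branch-and-bound interval scheme over a fine partition of $\alpha\in[0,\tfrac12]$, and deduces the uniform bound $G_n(\alpha)\leq 2\,(L_0\cdot 2\cdot 4^{1-\alpha})^n$ with $L_0=0.974886\ldots$, whence $p_e^1\geq 1/(16L_0^2)\geq 0.06576$. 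Your tilted-matrix/Perron formulation is the Legendre dual of this entropy computation, so that part of your plan is reconcilable with the paper; but without the domination argument (or some substitute controlling the non-Markovian boundary effects), your first-moment inequality is unjustified and the claimed constant cannot be certified. A minor further slip: good paths are not open with probability $p^n$ --- descending steps cost nothing --- though your later weighted-count formulation repairs this.
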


The first lower bound on $p_e^1$ was  $p_e^1 \geq 1/15616$ \cite{MR1765912}, obtained by a 
nice and tricky construction of spheres.
Then Atapour and Madras~\cite{MR2602981} improved it to $1/597$, by a cominatorial argument.
Finally Grimmett and Holroyd proved $p_e^1 \geq 0,04453$. Let us point out that there is still  a long way to go in order to obtain a lower bound close to the expected value of $p_e^1$. Indeed, numerical investigations indicate that $p_c-p^1_e$ should be of order $10^{-7}$ (and at least $1.8\cdot  10^{-7}$) \cite{MR935098},  while $p_c$  is  estimated with simulations to be near $0.248812$~\cite{1998PhRvE..57..230L}. Therefore, one expects $p^1_e$ to be about $0.24881$...


%

In the next section we introduce more formally the different notions of interest for us, state a more
complete theorem than Theorem \ref{thTrois} and add some more comments on the literature.

\section{Percolation, Spheres, Entanglement}

We consider the lattice $\Z^d$, whose elements are called \emph{vertices}, and pairs of vertices of euclidean distance one are called \emph{edges}. 
Two vertices of an edge are said to be \emph{neighbours}.
For $p\in (0, 1)$, in the \emph{bond percolation model} on $\Z^d$, edges are open with probability $p$ and closed with probability $1-p$, independently one of each other.
For a detailed exposition of the percolation model, we refer the reader to \cite{MR1707339}.

The terms "bond" and "edge" are very similar. However
with "bond" the intention is to insist on the topological embedding in $\R^d$
(a bond refers to the continuous segment in $\R^d$ joining two neighbours of $\Z^d$), whereas an "edge" refers only to a pair of neighbours of $\Z^d$. We will say that a bond is open or closed according to the state of its corresponding edge.

As already mentioned, a \emph{sphere} is a simplicial complex subset of $\R^d$ that is homeomorphic to the unit euclidean sphere $S^{d-1}:=\{(x_1,\dots,x_d)\in\R^d: x_1^2 + \dots +x_d^2=1\}$.
 A basic example of a sphere is given by the surface of a parallepiped. 
The complement of a sphere has a unique bounded component, which we call the \emph{inside} of the sphere.
Spheres considered 
in this article will not intersect $\Z^d$. Our goal will be to select a sphere intersecting only closed bonds.

Following \cite{MR2721052}, we set
$$
\rad [A]=\sup\left\{\sum_{i=1}^d |x_i|, (x_1,\dots,x_d)\in A\right\}
$$
for the \emph{radius} of $A \subset \R^d$ (understood from the origin).

We are now in position to state a more complete version of Theorem \ref{thTrois}.

\begin{thm}\label{thUn}
For all dimension $d\geq 3$, it holds
\begin{equation} \label{eqPCS}
 \pcs\geq \frac{1}{8(d-1)}  .
\end{equation}
Moreover, for all $p<\frac{1}{8(d-1)}$ and  all $\alpha\in(\sqrt{8p(d-1)},1)$, there exist $C>0$ and $S \in \S$  such that
\begin{equation}\label{eqRad}
 \Pp(\rad[S]\geq r)\leq C\alpha^r,\qquad \forall r>0 .
\end{equation}
\end{thm}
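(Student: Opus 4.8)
The plan is to derive the radius estimate \eqref{eqRad} first, and then read off the lower bound \eqref{eqPCS} from it. Indeed, suppose that for a given $p<\frac1{8(d-1)}$ we have produced a (configuration-dependent) sphere $S\in\S$ satisfying \eqref{eqRad} for some $\alpha\in(\sqrt{8p(d-1)},1)$. Since $\alpha<1$, we have $\Pp(\rad[S]\ge r)\to 0$, so $\rad[S]<\infty$ almost surely and $S$ is a genuine bounded sphere; as $S$ surrounds the origin and meets only closed bonds, the event $\S$ occurs with probability one. Hence $\Pp(\S)=1>0$, so $p\le\pcs$, and letting $p\uparrow\frac1{8(d-1)}$ yields \eqref{eqPCS}. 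Thus everything reduces to the explicit construction of such an $S$ together with its tail bound.

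For the construction I would not fix a sphere in advance but grow one from the origin. First I would explore the open cluster $C_0$ of the origin and then enlarge it to a finite connected set $A\ni 0$ whose complement in $\Z^d$ is connected, adjusting $A$ so that its outer boundary hypersurface, namely the union of the dual $(d-1)$-cells of the edges joining $A$ to the unbounded complementary component, is homeomorphic to $S^{d-1}$ and is a simplicial complex, as required by the definition of a sphere. The key point, and the place where the gain over Grimmett--Holroyd comes from, is that $A$ is allowed to be far from star-shaped: rather than advancing radially until the first closed bond in each direction, which forces star-shapedness and wastes closed bonds, I would let the exploration follow the actual open connections, so that $A$ is the smallest topologically admissible hull of the open structure trapping the origin. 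By construction every edge crossed by $\partial A$ has one endpoint outside $A$ in the unbounded component and is therefore closed, so $S:=\partial A\in\S$.

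It then remains to bound $\Pp(\rad[S]\ge r)$. The geometric idea is that $\rad[S]\ge r$ forces the open structure feeding the growth of $A$ to reach $\ell^1$-distance of order $r$ from the origin, which in turn produces a self-avoiding certificate, a sequence of open bonds linking successive pieces of the trapping configuration, each step of which advances the radius by a bounded amount while admitting at most $8(d-1)$ admissible continuations. Reaching radius $r$ therefore requires of order $r/2$ such open bonds, so a union bound over certificates gives
\begin{equation*}
\Pp(\rad[S]\ge r)\ \le\ C\,\big(8(d-1)\big)^{r/2}\,p^{\,r/2}\ =\ C\,\big(\sqrt{8p(d-1)}\big)^{r},
\end{equation*}
which is \eqref{eqRad} as soon as $\alpha>\sqrt{8p(d-1)}$, the surplus $\alpha/\sqrt{8p(d-1)}>1$ absorbing the polynomial corrections and the choice of starting cell. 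The constant $8(d-1)$ is precisely the number of local extensions available to such certificates, and pinning it down, equivalently isolating the correct economical class of certificates, is what produces the stated threshold.

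The main obstacle I anticipate is not the counting but the topology: ensuring that the grown hull $A$ can always be chosen so that $\partial A$ is genuinely a sphere, homeomorphic to $S^{d-1}$ and simplicial, rather than a boundary surface with handles, pinch points, or several components, while simultaneously keeping $A$ small enough that the certificate has only of order $r/2$ open bonds with at most $8(d-1)$ continuations each. Reconciling this topological admissibility with the economy needed for the sharp constant, which is exactly the flexibility that star-shaped spheres lack, is the delicate heart of the argument; once a clean class of admissible non-star-shaped hulls is isolated, the probabilistic estimate above is a routine Peierls-type summation.
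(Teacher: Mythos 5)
Your reduction of \eqref{eqPCS} to the tail bound \eqref{eqRad} is sound and agrees with the paper's logic, but both pillars on which your construction rests are left open, and they are exactly where the content of the theorem lies. First, the construction of the sphere. If you take $A$ to be the open cluster of the origin (plus filled-in holes), then indeed every edge from $A$ to $A^c$ is closed, but the union of dual plaquettes of these edges need not be homeomorphic to $S^{d-1}$ (it can have pinch points; see the configuration of Figure~\ref{figPl}); and if you enlarge $A$ to repair the topology, the new boundary edges have no reason to be closed, so your claim that every edge crossed by $\partial A$ ``is therefore closed'' fails. Note also that a sphere in $\S$ must avoid the open bonds of \emph{all} clusters, not only those incident to the cluster of the origin --- this is exactly what distinguishes entanglement from ordinary percolation --- so your ``smallest topologically admissible hull of the open structure trapping the origin'' is not known to exist, to be finite, or to have all its boundary edges closed. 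The paper resolves the closedness and the topology simultaneously with one specific device that your proposal lacks: the set $K$ of vertices reached from $0$ by \emph{good paths}, whose steps are either open edges or descending moves $\nu_i\preceq\nu_{i-1}$ (Definition~\ref{defGP}). Any edge exiting $K$ must be closed (an open one would prolong a good path), and $K$ is by construction a \emph{descending set}, for which Proposition~\ref{propS} --- proved via the face-exchange homeomorphism of Lemma~\ref{lemCube} --- produces a genuine simplicial sphere crossing exactly the closed boundary bonds of $K$. You explicitly defer this point (``the main obstacle I anticipate''), but without it there is no element of $\S$ at all.

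Second, the counting. You assert that certificates admit ``at most $8(d-1)$ admissible continuations'' per open bond, but no local count gives this: each step of a non-backtracking path has $2d-1$ continuations, and with at least half of the $n$ steps open this yields only $\bigl((2d-1)^2p\bigr)^{n/2}$, i.e.\ a threshold of order $1/(4d^2)$, which is Grimmett--Holroyd's bound, not $1/(8(d-1))$. In the paper the factor $8(d-1)=4\cdot(2d-2)$ emerges globally, not locally: the descending edge at each vertex is \emph{unique}, so a good path of length $n=r+2m$ with $B$ descending steps carries entropy roughly $2^{n}(2d-2)^{n-B}$ (the $2^n$ being the placement of the descending steps, and the bound being established by the Chernoff/large-deviations comparison \eqref{eqnH1}--\eqref{eqGn3}); each of the $A=n-B>n/2$ non-descending steps must be open and costs $p$; and one must then sum over all lengths $m\geq0$ and all proportions $B/n\in[0,1/2)$, a sum that converges exactly when $4(2d-2)p<1$. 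Your bound $C\bigl(8(d-1)p\bigr)^{r/2}$ coincides with the conclusion of this computation, but as presented it is an assertion of the result rather than a derivation: the mechanism producing the constant (uniqueness of the descending move, binomial placement entropy, and control of paths much longer than $r$) never appears. In short, your proposal has the correct skeleton --- grow a set, certify large radius by open bonds, sum over certificates --- but the two ideas that make it work, descending sets and good paths for topology-plus-closedness, and the large-deviation path count for the constant $8(d-1)$, are precisely the ones missing.
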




In the next remark we compare our result to  \cite{MR2721052}.

\begin{rem} \label{rem1}
Let $\sigma(k)$ be the number of self-avoiding paths with length $k$ in $\Z^d$ and let (see \textit{e.g.}\ \cite{MR2239599})
$\mu_d:=\lim_{k\rightarrow\infty}\sigma(k)^{1/k}$
be the connective constant of $\Z^d$. In \cite{MR2721052} the authors proved (among other things) that
$\pcs\geq\mu_d^{-2}$.
Since, see for example \cite{MR0166845,MR2239599},
$\lim_{d\rightarrow\infty}\frac{\mu_d}{2d}=1$, their result reads as $\pcs \geq 1/(4d^2)$, asymptotically.
Furthermore, the exact lower bounds of the connective constant provided in~\cite{MR302461}, \cite{MR2104301} and \cite{MR2003519} for $d\leq 6$, and the trivial fact that $\mu_d\geq d$, ensure that \eqref{eqPCS} is actually an improvement on $\pcs\geq\mu_d^{-2}$ for all dimensions.
\end{rem}

Let us briefly explain the notation $p_e^1$ for the $1$-entanglement critical probability.
As already mentioned, entanglement is a notion specific to the dimension $d=3$. 
For a finite set of bonds there is no uncertainty, at least heuristically, about what we consider entangled or not. But the picture get more complicated for an infinite set of bonds.
In~\cite{MR1770617}, the authors define the notion of entanglement systems, which leads to a family having two extremal elements, $\EE_0$ and $\EE_1$, the latter being the one considered in this article.

Given a set of edges $A$, denote by $[A]$ the union of its bonds (recall that a bond refers to the continuous segment joining the end points of the corresponding edge). 
A set of edges $A$, finite or infinite, is said to be in $\EE_1$ if there is no sphere separating $[A]$ into two disconnected parts. 
As a direct consequence of the definition we observe that a connected sets of edges $A$ (finite or infinite) belong to $\EE_1$. See Figure \ref{fig:ring} for an example of set in $\EE_1$ and of set not in $\EE_1$.

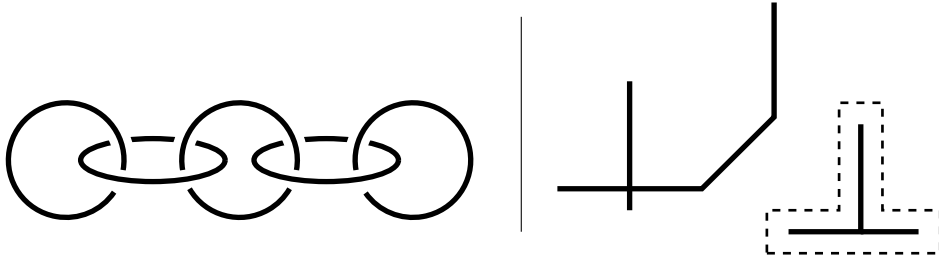
\begin{figure}
\begin{tikzpicture}[scale=0.95, every node/.style={scale=0.95}]

\begin{scope}[shift={(1.2,0)}]
\draw[line width=2pt](0:1) arc  (0:360:1 and 0.3) ;
\end{scope}
\draw[line width=8pt, color=white](-10:0.8) arc (-10:323:0.8);
\draw[line width=2pt](-10:0.8) arc (-10:326:0.8);

\begin{scope}[shift={(3.6,0)}]
\draw[line width=2pt](0:1) arc  (0:360:1 and 0.3) ;
\end{scope}
\begin{scope}[shift={(2.4,0)}]
\draw[line width=8pt, color=white](190:0.8) arc (190:-10:0.8);
\draw[line width=2pt](190:0.8) arc (190:-10:0.8);
\draw[line width=2pt](210:0.8) arc (210:330:0.8);
\end{scope}

\begin{scope}[shift={(4.8,0)}]
\draw[line width=8pt, color=white](190:0.8) arc (190:-145:0.8);
\draw[line width=2pt](190:0.8) arc (190:-145:0.8);
\end{scope}
\begin{scope}[shift={(0.3,0)}]
\draw(6,2)--(6,-1);
\end{scope}
\begin{scope}[shift={(6.8,-0.4)}, line width=2pt]
\draw(0,0)--(2,0)--(3,1)--(3,2.6);
\draw(1,-0.3)--(1,1.5);
\begin{scope}[shift={(3.2,-0.6)}]
\draw(0,0)--(1,0)--(1,1.5);
\draw(1,0)--(1.8,0);
\begin{scope}[ line width=1pt]
\draw[dashed](-0.3,-0.3)--(2.1,-0.3)--(2.1,0.3)--(1.3,0.3)--(1.3,1.8)--(0.7,1.8)--(0.7,0.3)--(-0.3,0.3)--(-0.3,-0.3);
\end{scope}

\end{scope}

\end{scope}

\end{tikzpicture}
\caption{Left: entangled and not connected set. Right: not entangled set.}
\label{fig:ring}
\end{figure}

We say that there is \emph{$1$-entanglement percolation} if there is an infinite set of open edges containing the origin that is an element of $\EE_1$.
Hence if a sphere with the origin in its inside intersects only closed bonds, there is no $1$-entanglement percolation. Note moreover that if there is percolation in the usual sense (\textit{i.e.}\ an infinite path of open edges starting from the origin), then there is also $1$-entanglement percolation.


\smallskip

We end this section with a sketch of our proof.

In order to explain our main ingredient, which is based on an improvement of the ideas from \cite{MR2721052}, we need first to introduce  the notion of plaquette. 
A \emph{plaquette} is any face of a cube of the form $x+[-\frac12, \frac12]^d$ with $x\in\Z^d$. 
A plaquette intersects a unique bond (and is orthogonal to it), and vice versa, so that there is a one to one correspondence between bonds and plaquettes. Based on this correspondence, a plaquette is open/closed according to the state of its corresponding bond.

A simple but key observation is that a sphere of closed plaquettes is necessarily intersecting only closed bonds while the existence of a sphere intersecting only closed bonds does not imply the
existence of a sphere of closed plaquettes. 
To convince the reader, one can consider, in $\Z^3$, a set consisting
of the six vertices $(0,0,0)$, $(1,0,0)$, $(0,1,0)$, $(1,1,0)$, $(0,0,1)$ and $(1,1,1)$, that is to say four vertices forming a square on the first floor, 
and two vertices on the second floor, these two being not neighbours. If one considers the set of plaquettes corresponding to the bonds on the outer border 
of this set, one can see that this is not a sphere due to the intersection of some plaquettes on the second floor. Nevertheless, taking a surface closer
to the vertices, one could  imagine a sphere intersecting only the bonds of the outer border, as in figure~\ref{figPl}.
This type of configurations shows that spheres of closed plaquettes are too constrained and therefore potentially not adapted to the study of $\pcs$.

\begin{figure}
\begin{tikzpicture}
\draw(0,0)--(2,0)--(2,1)--(1,1)--(1,2)--(0,2)--(0,0);
\draw(2.5,1.5)--(2,1)--(2,0)--(3,1)--(3,3)--(2.5,2.5)--(2.5,1.5);
\draw(1,2)--(1.5,2.5)--(0.5,2.5)--(0,2);
\draw(2.5,2.5)--(1.5,2.5)--(2,3)--(3,3);
\draw(1,1)--(1.5,1.5)--(1.5,2.5);
\draw(1.5,1.5)--(2.5,1.5);
\begin{scope}[shift={(6,0)}]
\draw(0,0)--(2,0)--(2,1)--(0,1)--(0,0);
\draw(2,0)--(3,1)--(3,2)--(2,1)--(2,0);
\draw(0,1)--(1,2)--(3,2);

\draw[fill=white, color=white](1,1.15)--(1,2.15)--(0.4,2.15)--(0.4,1.15)--(1,1.15);
\draw[fill=white, color=white](1,2.15)--(1.2,2.35)--(1.2,1.35)--(1,1.15)--(1,2.15);
\draw(1,1.15)--(1,2.15)--(0.4,2.15)--(0.4,1.15)--(1,1.15);
\draw(1,2.15)--(1.2,2.35)--(1.2,1.35)--(1,1.15);
\draw(1.2,2.35)--(0.6,2.35)--(0.4,2.15);

\begin{scope}[shift={(1.5,0.5)}]
\draw[fill=white, color=white](1,1.15)--(1,2.15)--(0.4,2.15)--(0.4,1.15)--(1,1.15);
\draw[fill=white, color=white](1,2.15)--(1.2,2.35)--(1.2,1.35)--(1,1.15)--(1,2.15);
\draw(1,1.15)--(1,2.15)--(0.4,2.15)--(0.4,1.15)--(1,1.15);
\draw(1,2.15)--(1.2,2.35)--(1.2,1.35)--(1,1.15);
\draw(1.2,2.35)--(0.6,2.35)--(0.4,2.15);
\end{scope}

\end{scope}
\end{tikzpicture}
\caption{A set of plaquettes that is not a sphere, and how to obtain a sphere.}
\label{figPl}
\end{figure}
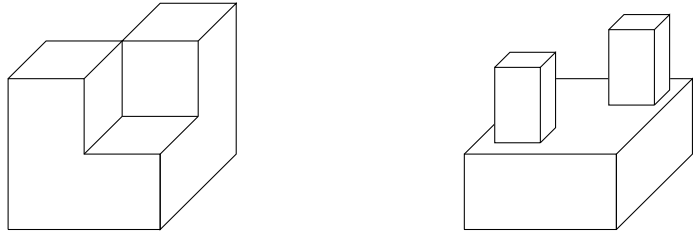


To ensure the presence of a sphere of plaquettes,  Grimmett and Holroyd \cite{MR2721052} introduced a notion of good paths. Given the sites $0=\nu_0, \nu_1,\dots,\nu_k$ 
of a self-avoiding path, they called it good if, for each $i$ satisfying  $\|\nu_{i-1}\|_1<\|\nu_i\|_1$,
 the edge $\langle\nu_{i-1}, \nu_{i}\rangle$ is open
 (where $\|x\|_1:=\sum_{i=1}^d |x_i|$ is the $\ell^1$-norm). 
 In particular a good path can move back (according to the $\ell^1$-norm) toward the origin without any constraint (and move away from the origin  through open edges).

One of the main idea of the present article is to modify the notion of good paths, asking for more constraints, therefore leading to a smaller family (of such good paths).
Instead of taking the open edges union all the oriented edges pointing "toward" 0, we take the open edges union of the oriented edges pointing toward 0 only along the last non null coordinate.
That is to say, the path is good (in our sense) if for each $i$, either the edge $\langle\nu_{i-1},\nu_i\rangle$ is open, or $\nu_i=\nu_{i-1}-e_j(\nu_{i-1})$ where $j$ is the last nonnull coordinate of $\nu_{i-1}$ and $e_j(v)=\sgn(v_j)e_j$. One can see in figure~\ref{fig:path} the difference between the two definitions of good paths.
We will show in section \ref{secg} how good paths are related to the event $\S$.

\begin{figure}
\begin{tikzpicture}[scale=0.82, every node/.style={scale=0.82}]

\begin{scope}[shift={(1,1)}]
\draw[fill=white](0,0)--(1,0)--(1,1)--(0,1)--(0,0);
\draw[fill=white](1,0)--(1.5,0.5)--(1.5,1.5)--(1,1)--(1,0);
\draw[fill=white](1.5,1.5)--(0.5,1.5)--(0,1)--(1,1)--(1.5,1.5);
\begin{scope}[shift={(1,0)}]
\draw[fill=white](0,0)--(1,0)--(1,1)--(0,1)--(0,0);
\draw[fill=white](1,0)--(1.5,0.5)--(1.5,1.5)--(1,1)--(1,0);
\draw[fill=white](1.5,1.5)--(0.5,1.5)--(0,1)--(1,1)--(1.5,1.5);
\end{scope}
\begin{scope}[shift={(2,0)}]
\draw[fill=white](0,0)--(1,0)--(1,1)--(0,1)--(0,0);
\draw[fill=white](1,0)--(1.5,0.5)--(1.5,1.5)--(1,1)--(1,0);
\draw[fill=white](1.5,1.5)--(0.5,1.5)--(0,1)--(1,1)--(1.5,1.5);
\end{scope}
\begin{scope}[shift={(3,0)}]
\draw[fill=white](0,0)--(1,0)--(1,1)--(0,1)--(0,0);
\draw[fill=white](1,0)--(1.5,0.5)--(1.5,1.5)--(1,1)--(1,0);
\draw[fill=white](1.5,1.5)--(0.5,1.5)--(0,1)--(1,1)--(1.5,1.5);
\end{scope}
\begin{scope}[shift={(4,0)}]
\draw[fill=white](0,0)--(1,0)--(1,1)--(0,1)--(0,0);
\draw[fill=white](1,0)--(1.5,0.5)--(1.5,1.5)--(1,1)--(1,0);
\draw[fill=white](1.5,1.5)--(0.5,1.5)--(0,1)--(1,1)--(1.5,1.5);
\end{scope}
\begin{scope}[shift={(0,1)}]
\draw[fill=white](0,0)--(1,0)--(1,1)--(0,1)--(0,0);
\draw[fill=white](1,0)--(1.5,0.5)--(1.5,1.5)--(1,1)--(1,0);
\draw[fill=white](1.5,1.5)--(0.5,1.5)--(0,1)--(1,1)--(1.5,1.5);
\begin{scope}[shift={(1,0)}]
\draw[fill=white](0,0)--(1,0)--(1,1)--(0,1)--(0,0);
\draw[fill=white](1,0)--(1.5,0.5)--(1.5,1.5)--(1,1)--(1,0);
\draw[fill=white](1.5,1.5)--(0.5,1.5)--(0,1)--(1,1)--(1.5,1.5);
\end{scope}
\begin{scope}[shift={(2,0)}]
\draw[fill=white](0,0)--(1,0)--(1,1)--(0,1)--(0,0);
\draw[fill=white](1,0)--(1.5,0.5)--(1.5,1.5)--(1,1)--(1,0);
\draw[fill=white](1.5,1.5)--(0.5,1.5)--(0,1)--(1,1)--(1.5,1.5);
\end{scope}
\begin{scope}[shift={(3,0)}]
\draw[fill=white](0,0)--(1,0)--(1,1)--(0,1)--(0,0);
\draw[fill=white](1,0)--(1.5,0.5)--(1.5,1.5)--(1,1)--(1,0);
\draw[fill=white](1.5,1.5)--(0.5,1.5)--(0,1)--(1,1)--(1.5,1.5);
\end{scope}
\begin{scope}[shift={(4,0)}]
\draw[fill=white](0,0)--(1,0)--(1,1)--(0,1)--(0,0);
\draw[fill=white](1,0)--(1.5,0.5)--(1.5,1.5)--(1,1)--(1,0);
\draw[fill=white](1.5,1.5)--(0.5,1.5)--(0,1)--(1,1)--(1.5,1.5);
\end{scope}

\end{scope}

\begin{scope}[shift={(0,2)}]
\draw[fill=white](0,0)--(1,0)--(1,1)--(0,1)--(0,0);
\draw[fill=white](1,0)--(1.5,0.5)--(1.5,1.5)--(1,1)--(1,0);
\draw[fill=white](1.5,1.5)--(0.5,1.5)--(0,1)--(1,1)--(1.5,1.5);
\begin{scope}[shift={(1,0)}]
\draw[fill=white](0,0)--(1,0)--(1,1)--(0,1)--(0,0);
\draw[fill=white](1,0)--(1.5,0.5)--(1.5,1.5)--(1,1)--(1,0);
\draw[fill=white](1.5,1.5)--(0.5,1.5)--(0,1)--(1,1)--(1.5,1.5);
\end{scope}
\begin{scope}[shift={(2,0)}]
\draw[fill=white](0,0)--(1,0)--(1,1)--(0,1)--(0,0);
\draw[fill=white](1,0)--(1.5,0.5)--(1.5,1.5)--(1,1)--(1,0);
\draw[fill=white](1.5,1.5)--(0.5,1.5)--(0,1)--(1,1)--(1.5,1.5);
\end{scope}
\begin{scope}[shift={(3,0)}]
\draw[fill=white](0,0)--(1,0)--(1,1)--(0,1)--(0,0);
\draw[fill=white](1,0)--(1.5,0.5)--(1.5,1.5)--(1,1)--(1,0);
\draw[fill=white](1.5,1.5)--(0.5,1.5)--(0,1)--(1,1)--(1.5,1.5);
\end{scope}
\begin{scope}[shift={(4,0)}]
\draw[fill=white](0,0)--(1,0)--(1,1)--(0,1)--(0,0);
\draw[fill=white](1,0)--(1.5,0.5)--(1.5,1.5)--(1,1)--(1,0);
\draw[fill=white](1.5,1.5)--(0.5,1.5)--(0,1)--(1,1)--(1.5,1.5);
\end{scope}

\end{scope}
\begin{scope}[shift={(0,3)}]
\draw[fill=white](0,0)--(1,0)--(1,1)--(0,1)--(0,0);
\draw[fill=white](1,0)--(1.5,0.5)--(1.5,1.5)--(1,1)--(1,0);
\draw[fill=white](1.5,1.5)--(0.5,1.5)--(0,1)--(1,1)--(1.5,1.5);
\begin{scope}[shift={(1,0)}]
\draw[fill=white](0,0)--(1,0)--(1,1)--(0,1)--(0,0);
\draw[fill=white](1,0)--(1.5,0.5)--(1.5,1.5)--(1,1)--(1,0);
\draw[fill=white](1.5,1.5)--(0.5,1.5)--(0,1)--(1,1)--(1.5,1.5);
\end{scope}
\begin{scope}[shift={(2,0)}]
\draw[fill=white](0,0)--(1,0)--(1,1)--(0,1)--(0,0);
\draw[fill=white](1,0)--(1.5,0.5)--(1.5,1.5)--(1,1)--(1,0);
\draw[fill=white](1.5,1.5)--(0.5,1.5)--(0,1)--(1,1)--(1.5,1.5);
\end{scope}
\begin{scope}[shift={(3,0)}]
\draw[fill=white](0,0)--(1,0)--(1,1)--(0,1)--(0,0);
\draw[fill=white](1,0)--(1.5,0.5)--(1.5,1.5)--(1,1)--(1,0);
\draw[fill=white](1.5,1.5)--(0.5,1.5)--(0,1)--(1,1)--(1.5,1.5);
\end{scope}
\begin{scope}[shift={(4,0)}]
\draw[fill=white](0,0)--(1,0)--(1,1)--(0,1)--(0,0);
\draw[fill=gray!30](1,0)--(1.5,0.5)--(1.5,1.5)--(1,1)--(1,0);
\draw[fill=gray!30](1.5,1.5)--(0.5,1.5)--(0,1)--(1,1)--(1.5,1.5);
\end{scope}

\end{scope}
\end{scope}

\begin{scope}[shift={(0.5,0.5)}]
\draw[fill=white](0,0)--(1,0)--(1,1)--(0,1)--(0,0);
\draw[fill=white](1,0)--(1.5,0.5)--(1.5,1.5)--(1,1)--(1,0);
\draw[fill=white](1.5,1.5)--(0.5,1.5)--(0,1)--(1,1)--(1.5,1.5);
\begin{scope}[shift={(1,0)}]
\draw[fill=white](0,0)--(1,0)--(1,1)--(0,1)--(0,0);
\draw[fill=white](1,0)--(1.5,0.5)--(1.5,1.5)--(1,1)--(1,0);
\draw[fill=white](1.5,1.5)--(0.5,1.5)--(0,1)--(1,1)--(1.5,1.5);
\end{scope}
\begin{scope}[shift={(2,0)}]
\draw[fill=white](0,0)--(1,0)--(1,1)--(0,1)--(0,0);
\draw[fill=white](1,0)--(1.5,0.5)--(1.5,1.5)--(1,1)--(1,0);
\draw[fill=white](1.5,1.5)--(0.5,1.5)--(0,1)--(1,1)--(1.5,1.5);
\end{scope}
\begin{scope}[shift={(3,0)}]
\draw[fill=white](0,0)--(1,0)--(1,1)--(0,1)--(0,0);
\draw[fill=white](1,0)--(1.5,0.5)--(1.5,1.5)--(1,1)--(1,0);
\draw[fill=white](1.5,1.5)--(0.5,1.5)--(0,1)--(1,1)--(1.5,1.5);
\end{scope}
\begin{scope}[shift={(4,0)}]
\draw[fill=white](0,0)--(1,0)--(1,1)--(0,1)--(0,0);
\draw[fill=white](1,0)--(1.5,0.5)--(1.5,1.5)--(1,1)--(1,0);
\draw[fill=white](1.5,1.5)--(0.5,1.5)--(0,1)--(1,1)--(1.5,1.5);
\end{scope}
\begin{scope}[shift={(0,1)}]
\draw[fill=white](0,0)--(1,0)--(1,1)--(0,1)--(0,0);
\draw[fill=white](1,0)--(1.5,0.5)--(1.5,1.5)--(1,1)--(1,0);
\draw[fill=white](1.5,1.5)--(0.5,1.5)--(0,1)--(1,1)--(1.5,1.5);
\begin{scope}[shift={(1,0)}]
\draw[fill=white](0,0)--(1,0)--(1,1)--(0,1)--(0,0);
\draw[fill=white](1,0)--(1.5,0.5)--(1.5,1.5)--(1,1)--(1,0);
\draw[fill=white](1.5,1.5)--(0.5,1.5)--(0,1)--(1,1)--(1.5,1.5);
\end{scope}
\begin{scope}[shift={(2,0)}]
\draw[fill=white](0,0)--(1,0)--(1,1)--(0,1)--(0,0);
\draw[fill=white](1,0)--(1.5,0.5)--(1.5,1.5)--(1,1)--(1,0);
\draw[fill=white](1.5,1.5)--(0.5,1.5)--(0,1)--(1,1)--(1.5,1.5);
\end{scope}
\begin{scope}[shift={(3,0)}]
\draw[fill=white](0,0)--(1,0)--(1,1)--(0,1)--(0,0);
\draw[fill=white](1,0)--(1.5,0.5)--(1.5,1.5)--(1,1)--(1,0);
\draw[fill=white](1.5,1.5)--(0.5,1.5)--(0,1)--(1,1)--(1.5,1.5);
\end{scope}
\begin{scope}[shift={(4,0)}]
\draw[fill=white](0,0)--(1,0)--(1,1)--(0,1)--(0,0);
\draw[fill=white](1,0)--(1.5,0.5)--(1.5,1.5)--(1,1)--(1,0);
\draw[fill=white](1.5,1.5)--(0.5,1.5)--(0,1)--(1,1)--(1.5,1.5);
\end{scope}

\end{scope}

\begin{scope}[shift={(0,2)}]
\draw[fill=white](0,0)--(1,0)--(1,1)--(0,1)--(0,0);
\draw[fill=white](1,0)--(1.5,0.5)--(1.5,1.5)--(1,1)--(1,0);
\draw[fill=white](1.5,1.5)--(0.5,1.5)--(0,1)--(1,1)--(1.5,1.5);
\begin{scope}[shift={(1,0)}]
\draw[fill=white](0,0)--(1,0)--(1,1)--(0,1)--(0,0);
\draw[fill=white](1,0)--(1.5,0.5)--(1.5,1.5)--(1,1)--(1,0);
\draw[fill=white](1.5,1.5)--(0.5,1.5)--(0,1)--(1,1)--(1.5,1.5);
\end{scope}
\begin{scope}[shift={(2,0)}]
\draw[fill=white](0,0)--(1,0)--(1,1)--(0,1)--(0,0);
\draw[fill=white](1,0)--(1.5,0.5)--(1.5,1.5)--(1,1)--(1,0);
\draw[fill=white](1.5,1.5)--(0.5,1.5)--(0,1)--(1,1)--(1.5,1.5);
\end{scope}
\begin{scope}[shift={(3,0)}]
\draw[fill=white](0,0)--(1,0)--(1,1)--(0,1)--(0,0);
\draw[fill=white](1,0)--(1.5,0.5)--(1.5,1.5)--(1,1)--(1,0);
\draw[fill=white](1.5,1.5)--(0.5,1.5)--(0,1)--(1,1)--(1.5,1.5);
\end{scope}
\begin{scope}[shift={(4,0)}]
\draw[fill=white](0,0)--(1,0)--(1,1)--(0,1)--(0,0);
\draw[fill=white](1,0)--(1.5,0.5)--(1.5,1.5)--(1,1)--(1,0);
\draw[fill=white](1.5,1.5)--(0.5,1.5)--(0,1)--(1,1)--(1.5,1.5);
\end{scope}

\end{scope}
\begin{scope}[shift={(0,3)}]
\draw[fill=white](0,0)--(1,0)--(1,1)--(0,1)--(0,0);
\draw[fill=white](1,0)--(1.5,0.5)--(1.5,1.5)--(1,1)--(1,0);
\draw[fill=white](1.5,1.5)--(0.5,1.5)--(0,1)--(1,1)--(1.5,1.5);
\begin{scope}[shift={(1,0)}]
\draw[fill=white](0,0)--(1,0)--(1,1)--(0,1)--(0,0);
\draw[fill=white](1,0)--(1.5,0.5)--(1.5,1.5)--(1,1)--(1,0);
\draw[fill=white](1.5,1.5)--(0.5,1.5)--(0,1)--(1,1)--(1.5,1.5);
\end{scope}
\begin{scope}[shift={(2,0)}]
\draw[fill=white](0,0)--(1,0)--(1,1)--(0,1)--(0,0);
\draw[fill=white](1,0)--(1.5,0.5)--(1.5,1.5)--(1,1)--(1,0);
\draw[fill=white](1.5,1.5)--(0.5,1.5)--(0,1)--(1,1)--(1.5,1.5);
\end{scope}
\begin{scope}[shift={(3,0)}]
\draw[fill=white](0,0)--(1,0)--(1,1)--(0,1)--(0,0);
\draw[fill=white](1,0)--(1.5,0.5)--(1.5,1.5)--(1,1)--(1,0);
\draw[fill=white](1.5,1.5)--(0.5,1.5)--(0,1)--(1,1)--(1.5,1.5);
\end{scope}
\begin{scope}[shift={(4,0)}]
\draw[fill=white](0,0)--(1,0)--(1,1)--(0,1)--(0,0);
\draw[fill=white](1,0)--(1.5,0.5)--(1.5,1.5)--(1,1)--(1,0);
\draw[fill=white](1.5,1.5)--(0.5,1.5)--(0,1)--(1,1)--(1.5,1.5);
\end{scope}

\end{scope}
\end{scope}

\draw[fill=white](0,0)--(1,0)--(1,1)--(0,1)--(0,0);
\draw[fill=white](1,0)--(1.5,0.5)--(1.5,1.5)--(1,1)--(1,0);
\draw[fill=white](1.5,1.5)--(0.5,1.5)--(0,1)--(1,1)--(1.5,1.5);
\begin{scope}[shift={(1,0)}]
\draw[fill=white](0,0)--(1,0)--(1,1)--(0,1)--(0,0);
\draw[fill=white](1,0)--(1.5,0.5)--(1.5,1.5)--(1,1)--(1,0);
\draw[fill=white](1.5,1.5)--(0.5,1.5)--(0,1)--(1,1)--(1.5,1.5);
\end{scope}
\begin{scope}[shift={(2,0)}]
\draw[fill=white](0,0)--(1,0)--(1,1)--(0,1)--(0,0);
\draw[fill=white](1,0)--(1.5,0.5)--(1.5,1.5)--(1,1)--(1,0);
\draw[fill=white](1.5,1.5)--(0.5,1.5)--(0,1)--(1,1)--(1.5,1.5);
\end{scope}
\begin{scope}[shift={(3,0)}]
\draw[fill=white](0,0)--(1,0)--(1,1)--(0,1)--(0,0);
\draw[fill=white](1,0)--(1.5,0.5)--(1.5,1.5)--(1,1)--(1,0);
\draw[fill=white](1.5,1.5)--(0.5,1.5)--(0,1)--(1,1)--(1.5,1.5);
\end{scope}
\begin{scope}[shift={(4,0)}]
\draw[fill=white](0,0)--(1,0)--(1,1)--(0,1)--(0,0);
\draw[fill=white](1,0)--(1.5,0.5)--(1.5,1.5)--(1,1)--(1,0);
\draw[fill=white](1.5,1.5)--(0.5,1.5)--(0,1)--(1,1)--(1.5,1.5);
\end{scope}
\begin{scope}[shift={(0,1)}]
\draw[fill=white](0,0)--(1,0)--(1,1)--(0,1)--(0,0);
\draw[fill=white](1,0)--(1.5,0.5)--(1.5,1.5)--(1,1)--(1,0);
\draw[fill=white](1.5,1.5)--(0.5,1.5)--(0,1)--(1,1)--(1.5,1.5);
\begin{scope}[shift={(1,0)}]
\draw[fill=white](0,0)--(1,0)--(1,1)--(0,1)--(0,0);
\draw[fill=white](1,0)--(1.5,0.5)--(1.5,1.5)--(1,1)--(1,0);
\draw[fill=white](1.5,1.5)--(0.5,1.5)--(0,1)--(1,1)--(1.5,1.5);
\end{scope}
\begin{scope}[shift={(2,0)}]
\draw[fill=white](0,0)--(1,0)--(1,1)--(0,1)--(0,0);
\draw[fill=white](1,0)--(1.5,0.5)--(1.5,1.5)--(1,1)--(1,0);
\draw[fill=white](1.5,1.5)--(0.5,1.5)--(0,1)--(1,1)--(1.5,1.5);
\end{scope}
\begin{scope}[shift={(3,0)}]
\draw[fill=white](0,0)--(1,0)--(1,1)--(0,1)--(0,0);
\draw[fill=white](1,0)--(1.5,0.5)--(1.5,1.5)--(1,1)--(1,0);
\draw[fill=white](1.5,1.5)--(0.5,1.5)--(0,1)--(1,1)--(1.5,1.5);
\end{scope}
\begin{scope}[shift={(4,0)}]
\draw[fill=white](0,0)--(1,0)--(1,1)--(0,1)--(0,0);
\draw[fill=white](1,0)--(1.5,0.5)--(1.5,1.5)--(1,1)--(1,0);
\draw[fill=white](1.5,1.5)--(0.5,1.5)--(0,1)--(1,1)--(1.5,1.5);
\end{scope}

\end{scope}

\begin{scope}[shift={(0,2)}]
\draw[fill=white](0,0)--(1,0)--(1,1)--(0,1)--(0,0);
\draw[fill=white](1,0)--(1.5,0.5)--(1.5,1.5)--(1,1)--(1,0);
\draw[fill=white](1.5,1.5)--(0.5,1.5)--(0,1)--(1,1)--(1.5,1.5);
\begin{scope}[shift={(1,0)}]
\draw[fill=white](0,0)--(1,0)--(1,1)--(0,1)--(0,0);
\draw[fill=white](1,0)--(1.5,0.5)--(1.5,1.5)--(1,1)--(1,0);
\draw[fill=white](1.5,1.5)--(0.5,1.5)--(0,1)--(1,1)--(1.5,1.5);
\end{scope}
\begin{scope}[shift={(2,0)}]
\draw[fill=white](0,0)--(1,0)--(1,1)--(0,1)--(0,0);
\draw[fill=white](1,0)--(1.5,0.5)--(1.5,1.5)--(1,1)--(1,0);
\draw[fill=white](1.5,1.5)--(0.5,1.5)--(0,1)--(1,1)--(1.5,1.5);
\end{scope}
\begin{scope}[shift={(3,0)}]
\draw[fill=white](0,0)--(1,0)--(1,1)--(0,1)--(0,0);
\draw[fill=white](1,0)--(1.5,0.5)--(1.5,1.5)--(1,1)--(1,0);
\draw[fill=white](1.5,1.5)--(0.5,1.5)--(0,1)--(1,1)--(1.5,1.5);
\end{scope}
\begin{scope}[shift={(4,0)}]
\draw[fill=white](0,0)--(1,0)--(1,1)--(0,1)--(0,0);
\draw[fill=white](1,0)--(1.5,0.5)--(1.5,1.5)--(1,1)--(1,0);
\draw[fill=white](1.5,1.5)--(0.5,1.5)--(0,1)--(1,1)--(1.5,1.5);
\end{scope}

\end{scope}
\begin{scope}[shift={(0,3)}]
\draw[fill=white](0,0)--(1,0)--(1,1)--(0,1)--(0,0);
\draw[fill=white](1,0)--(1.5,0.5)--(1.5,1.5)--(1,1)--(1,0);
\draw[fill=white](1.5,1.5)--(0.5,1.5)--(0,1)--(1,1)--(1.5,1.5);
\begin{scope}[shift={(1,0)}]
\draw[fill=white](0,0)--(1,0)--(1,1)--(0,1)--(0,0);
\draw[fill=white](1,0)--(1.5,0.5)--(1.5,1.5)--(1,1)--(1,0);
\draw[fill=white](1.5,1.5)--(0.5,1.5)--(0,1)--(1,1)--(1.5,1.5);
\end{scope}
\begin{scope}[shift={(2,0)}]
\draw[fill=white](0,0)--(1,0)--(1,1)--(0,1)--(0,0);
\draw[fill=white](1,0)--(1.5,0.5)--(1.5,1.5)--(1,1)--(1,0);
\draw[fill=white](1.5,1.5)--(0.5,1.5)--(0,1)--(1,1)--(1.5,1.5);
\end{scope}
\begin{scope}[shift={(3,0)}]
\draw[fill=white](0,0)--(1,0)--(1,1)--(0,1)--(0,0);
\draw[fill=white](1,0)--(1.5,0.5)--(1.5,1.5)--(1,1)--(1,0);
\draw[fill=white](1.5,1.5)--(0.5,1.5)--(0,1)--(1,1)--(1.5,1.5);
\end{scope}
\begin{scope}[shift={(4,0)}]
\draw[fill=white](0,0)--(1,0)--(1,1)--(0,1)--(0,0);
\draw[fill=white](1,0)--(1.5,0.5)--(1.5,1.5)--(1,1)--(1,0);
\draw[fill=white](1.5,1.5)--(0.5,1.5)--(0,1)--(1,1)--(1.5,1.5);
\end{scope}

\end{scope}

\begin{scope}[shift={(7,0)}]

\begin{scope}[shift={(5,1)}]
\draw[fill=white](0,0)--(1,0)--(1,1)--(0,1)--(0,0);
\draw[fill=white](1,0)--(1.5,0.5)--(1.5,1.5)--(1,1)--(1,0);
\draw[fill=white](1.5,1.5)--(0.5,1.5)--(0,1)--(1,1)--(1.5,1.5);
\end{scope}
\begin{scope}[shift={(4.5,0.5)}]
\draw[fill=white](0,0)--(1,0)--(1,1)--(0,1)--(0,0);
\draw[fill=white](1,0)--(1.5,0.5)--(1.5,1.5)--(1,1)--(1,0);
\draw[fill=white](1.5,1.5)--(0.5,1.5)--(0,1)--(1,1)--(1.5,1.5);
\end{scope}

\draw[fill=white](0,0)--(1,0)--(1,1)--(0,1)--(0,0);
\draw[fill=white](1,0)--(1.5,0.5)--(1.5,1.5)--(1,1)--(1,0);
\draw[fill=white](1.5,1.5)--(0.5,1.5)--(0,1)--(1,1)--(1.5,1.5);
\begin{scope}[shift={(1,0)}]
\draw[fill=white](0,0)--(1,0)--(1,1)--(0,1)--(0,0);
\draw[fill=white](1,0)--(1.5,0.5)--(1.5,1.5)--(1,1)--(1,0);
\draw[fill=white](1.5,1.5)--(0.5,1.5)--(0,1)--(1,1)--(1.5,1.5);
\end{scope}
\begin{scope}[shift={(2,0)}]
\draw[fill=white](0,0)--(1,0)--(1,1)--(0,1)--(0,0);
\draw[fill=white](1,0)--(1.5,0.5)--(1.5,1.5)--(1,1)--(1,0);
\draw[fill=white](1.5,1.5)--(0.5,1.5)--(0,1)--(1,1)--(1.5,1.5);
\end{scope}
\begin{scope}[shift={(3,0)}]
\draw[fill=white](0,0)--(1,0)--(1,1)--(0,1)--(0,0);
\draw[fill=white](1,0)--(1.5,0.5)--(1.5,1.5)--(1,1)--(1,0);
\draw[fill=white](1.5,1.5)--(0.5,1.5)--(0,1)--(1,1)--(1.5,1.5);
\end{scope}
\begin{scope}[shift={(4,0)}]
\draw[fill=white](0,0)--(1,0)--(1,1)--(0,1)--(0,0);
\draw[fill=white](1,0)--(1.5,0.5)--(1.5,1.5)--(1,1)--(1,0);
\draw[fill=white](1.5,1.5)--(0.5,1.5)--(0,1)--(1,1)--(1.5,1.5);
\end{scope}

\begin{scope}[shift={(5,2)}]
\draw[fill=white](0,0)--(1,0)--(1,1)--(0,1)--(0,0);
\draw[fill=white](1,0)--(1.5,0.5)--(1.5,1.5)--(1,1)--(1,0);
\draw[fill=white](1.5,1.5)--(0.5,1.5)--(0,1)--(1,1)--(1.5,1.5);
\end{scope}
\begin{scope}[shift={(5,3)}]
\draw[fill=white](0,0)--(1,0)--(1,1)--(0,1)--(0,0);
\draw[fill=white](1,0)--(1.5,0.5)--(1.5,1.5)--(1,1)--(1,0);
\draw[fill=white](1.5,1.5)--(0.5,1.5)--(0,1)--(1,1)--(1.5,1.5);
\end{scope}
\begin{scope}[shift={(5,4)}]
\draw[fill=gray!30](0,0)--(1,0)--(1,1)--(0,1)--(0,0);
\draw[fill=gray!30](1,0)--(1.5,0.5)--(1.5,1.5)--(1,1)--(1,0);
\draw[fill=gray!30](1.5,1.5)--(0.5,1.5)--(0,1)--(1,1)--(1.5,1.5);
\end{scope}
\end{scope}
\end{tikzpicture}
\caption{Starting from $(4,2,3)$, the sets of vertices potentially attained with only closed edges: previous definition of good paths and the current one.}
\label{fig:path}
\end{figure}
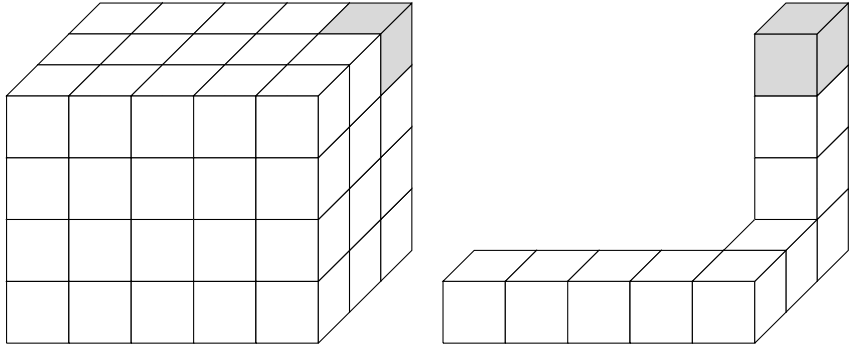

As this will become clear after Section  \ref{secg}, in order to obtain the desired lower bound on $\pcs$ (of Theorem \ref{thUn}), one needs to bound the probability of the existence of good paths. In particular, one needs to show that there are not too many good paths of given length.
We achieve this, by means of large deviations, in section~\ref{secP}, by carefully controlling the number of closed edges in any good path. 

Finally, let us mention that the method developed in section~\ref{secP} yields to a closed form lower bound on $\pcs$
in any dimension. This can however be improved numerically, a strategy that we achieve in section~\ref{secMarkov} and that relies on the study of a Markov chain together with the use of large deviations techniques, proving theorem~\ref{thDeux}.
Notice that our procedure, to improve the bound on $\pcs$, applies to all dimensions as illustrated at the end of the section for $d=4$ and $d=5$.

\section{Descending sets and Spheres}\label{secg}
We define in this section a class of finite sets of $\Z^d$, which we will be able to surround by a topological sphere.
Roughly speaking, if $x=(x_1, \ldots, x_d)$ is an element of such a set, then the segments 
$[(x_1, \ldots, x_i, 0,\ldots, 0), (x_1, \ldots, x_{i-1}, 0, \ldots, 0)]$ 
will be contained also in the set.

\begin{defi}
For $x$ in $\R^d\setminus\{0\}$, write $n(x)$ for the index of the last non-null coordinate of $x$.
For $x,y\in\R^d$, write $y\preceq x$ if the following items hold:
\begin{itemize}
\item $\forall i\in[1,d]$, $x_iy_i\geq 0$
 \item  $\forall i\in[1,d]$, $|y_i|\leq |x_i|$
\item $\forall i<n(y)$, $y_i=x_i$

\end{itemize}
Equivalently, $y$ lies on the broken line that relies $x$ to the origin coordinate by coordinate, beginning with the last one.

We say that $K\subset\Z^d$ is a descending set if it is a finite set containing 0, with the property that if $x\in K$, then
every $y\in\Z^d$ with $y\preceq x$ lies in $K$.
\end{defi}

We now give the adapted version of Proposition~$3$ of~\cite{MR2721052}:
\begin{prop}\label{propS}
Let $d\geq2$. Suppose $K\subset \Z^d$ is a descending set. Let $\CE$ be the bonds that have one endvertex in $K$ and the other in $K^c$.
Then there exists a sphere in $\R^d$ that intersects all the bonds of $\CE$, and no other one.
\end{prop}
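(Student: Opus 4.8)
\emph{Reduction.} I would first observe that it suffices to construct an embedded topological sphere $S\subset\R^d$, realized as a simplicial complex, which avoids $\Z^d$ and has all of $K$ in its inside and all of $K^c$ in its outside. Indeed, a bond $\langle u,v\rangle$ is then crossed by $S$ exactly when $u$ and $v$ lie on opposite sides of $S$, i.e.\ exactly when precisely one of them belongs to $K$, which is the condition $\langle u,v\rangle\in\CE$. To guarantee that interior bonds (both endpoints in $K$) and exterior bonds (both in $K^c$) are crossed \emph{zero} times rather than merely an even number of times, I would realize $S$ as the boundary of a region $V$ contained in the closed unit cubes centred at the points of $K$, deformed only inside small balls that avoid every bond; then each interior bond stays inside $V$, each exterior bond stays outside, and each boundary bond crosses $S$ exactly once, at its midpoint plaquette.

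\emph{Inductive construction of the region.} The plan is to build $V$ one cube at a time. Enumerate $K=\{x^{(0)},\dots,x^{(m)}\}$ in nondecreasing $\ell^1$-norm, with $x^{(0)}=0$ and ties broken arbitrarily, and set $K_j=\{x^{(0)},\dots,x^{(j)}\}$. A short check shows that each $K_j$ is again a descending set: if $y\preceq x$ with $y\neq x$ then $|y_i|\leq|x_i|$ for all $i$ with at least one strict inequality, so $\|y\|_1<\|x\|_1$ and $y$ precedes $x$ in the enumeration. Writing $n=n(x)$, the \emph{parent} $x^-:=x-\sgn(x_n)e_n$ satisfies $x^-\preceq x$, hence $x^-\in K$ with $\|x^-\|_1=\|x\|_1-1$, so every $x^{(j)}$ with $j\geq1$ already has its parent in $K_{j-1}$. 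Letting $U_j$ be the union of the closed unit cubes centred at the points of $K_j$, I would prove by induction that (a suitably deformed version of) $U_j$ is a topological $d$-ball whose boundary is a simplicial sphere, the base case $U_0$ being a single cube whose boundary crosses exactly the $2d$ bonds emanating from $0$.

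\emph{The attaching step.} The decisive point is to determine, at stage $j$, along which faces the new cube $C=x+[-\tfrac12,\tfrac12]^d$ (with $x=x^{(j)}$) meets $U_{j-1}$. Since face-adjacent vertices have $\ell^1$-norms differing by exactly $1$, every face-neighbour of $x$ that is already present has strictly smaller norm, hence is of the form $x-\sgn(x_i)e_i$ with $x_i\neq0$; no two of these are opposite, so the corresponding facets of $C$ all contain the common inward corner of $C$, and their union is a $(d-1)$-ball (it $PL$-retracts to that corner). Moreover the facet of $C$ in the direction $+\sgn(x_n)e_n$ is always free, its neighbour $x+\sgn(x_n)e_n$ having larger norm, so the gluing region is a proper subdisk of $\partial C\cong S^{d-1}$ and is nonempty (it contains the parent facet). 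Then $C$ is attached to the ball $U_{j-1}$ along a $(d-1)$-ball lying in each of their boundaries, and the standard fact that the union of two $d$-balls glued along such a boundary disk is again a $d$-ball keeps the inductive hypothesis alive.

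\emph{Main obstacle and its resolution.} The real difficulty is that $C$ may also touch $U_{j-1}$ along faces of dimension $\leq d-2$ (edges, corners) coming from already-present vertices that are \emph{not} face-neighbours of $x$: this is exactly the pinching of Figure~\ref{figPl}, where the set-theoretic union $U_{j-1}\cup C$ fails to be a manifold. I would treat every such bad face $F$ by a local surgery: in a small ball around $F$, chosen disjoint from $\Z^d$ and from every bond (possible because such $F$ lie strictly between lattice points and away from the bond midpoints), I deform the surface so as to pull apart the sheets meeting along $F$, separating the cubes that touch only through $F$. This is where the descending hypothesis does its work: it constrains the pattern of occupied orthants around each bad face, so that the separations can be performed with disjoint supports and never create a handle or a second component. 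Carrying the deformations along in the induction produces $V=V_m$, a $d$-ball whose boundary $S$ is an embedded simplicial sphere avoiding $\Z^d$, with $K$ inside and $K^c$ outside, crossing exactly the bonds of $\CE$. The step I expect to be most delicate to write carefully is precisely this simultaneous surgery: checking that the local separations at different bad faces are mutually compatible and that the resulting object is a genuine simplicial sphere, not merely a topological one.
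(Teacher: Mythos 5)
Your reduction, your norm-ordering of $K$ (prefixes are indeed descending, and the parent $x-\sgn(x_{n(x)})e_{n(x)}$ is always already present), and the clean attaching step (the facets of $C$ toward already-present neighbours all contain a common face of $C$, hence form a $(d-1)$-ball, and gluing two $d$-balls along a boundary $(d-1)$-ball gives a $d$-ball) are all sound, and broadly parallel the paper's induction. But there is a genuine gap at exactly the point you flag as delicate: the simultaneous surgery at the lower-dimensional touchings. With full unit cubes, $C\cap U_{j-1}$ is \emph{not} the union of attaching facets; it also contains faces of dimension at most $d-2$ coming from diagonally placed cubes, and such configurations are genuinely produced by descending sets --- in the paper's own example (Figure~\ref{figPl}) the vertices $(0,0,1)$ and $(1,1,1)$ are present while $(1,0,1)$ and $(0,1,1)$ are not, so the two corresponding cubes meet along the segment $\{(0.5,0.5)\}\times[0.5,1.5]$. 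Your justification that all such pinches can be resolved is the assertion that the descending hypothesis ``constrains the pattern of occupied orthants'' so that the separations have disjoint supports and create no handle or extra component; this is never proved, and it is not a routine check. Note in particular that the bad faces meet the closures of the attaching facets themselves: in the example above, the bad segment ends at the corner $(0.5,0.5,0.5)$ of the bottom facet along which the cube at $(1,1,1)$ is glued to the cube at $(1,1,0)$. So each local separation must be shown compatible both with the other separations and with the facet-gluings driving the induction, and the inductive hypothesis must be restated for the deformed regions $V_j$ rather than for the cube unions $U_j$. Since the entire content of the proposition is that the naive plaquette/cube-boundary surface can fail to be a sphere and must be repaired, deferring this repair amounts to deferring the theorem.

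For comparison, the paper's proof is engineered so that no pinch ever occurs and hence no surgery is needed: instead of unit cubes it uses the shrunken boxes $x+[-0.4,0.4]^d$, enlarged to $[-0.6,0.4]$ only in the coordinate joining a vertex to its parent, together with explicit bridge boxes $B(x,k,i)$ joining neighbours within the same slice. Boxes of non-neighbouring vertices are then pairwise disjoint, every intersection that does occur is an explicitly listed union of facets containing a distinguished facet and avoiding its opposite, and Lemma~\ref{lemCube} (a homeomorphism between two complementary sets of faces of a cube, equal to the identity on their common boundary) certifies after each addition that the boundary remains a sphere. If you want to keep full unit cubes, you would need to carry out your resolution argument in full, i.e.\ prove a lemma playing the structural role that Lemma~\ref{lemCube} plays in the paper; as written, your proposal identifies the crux correctly but does not cross it.
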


In order to prove this proposition, we shall use a certain homeomorphism on the surface of a hypercube. 
Consider the hypercube $[-1, 1]^d$, and enumerate its $2d$ faces $F_i$ by letting $F_i=[-1, 1]^{i-1}\times \{1\}\times [-1, 1]^{d-i}$ for $i\in [1, d]$, and
$F_i=[-1, 1]^{i-d-1}\times \{-1\}\times [-1, 1]^{2d-i}$ for $i\in [d+1, 2d].$
\begin{lem}\label{lemCube}
Let $\tilde I\subset\{1,2,\ldots,d-1,d+1,d+2,\ldots,2d-1\}$, $I=\tilde I\cup \{2d\},$ and $J$ is the complementary of $I$ in $[1,2d]$. 
Let $G=\bigcup_{i\in I} F_i$ and $H=\bigcup_{i\in J}F_i.$ There exists a homeomorphism between $G$ and $H$ that is the identity on the intersection $G\cap H$.
\end{lem}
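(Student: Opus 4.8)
The plan is to show that $G$ and $H$ are each piecewise-linear $(d-1)$-balls that meet exactly along their common boundary $\Sigma:=G\cap H$, a $(d-2)$-sphere, and then to produce the homeomorphism by coning (Alexander's trick). First I would record the combinatorial picture. Since $I\cup J=[1,2d]$ and $I\cap J=\emptyset$, the $2d$ faces are partitioned between $G$ and $H$; hence $G\cup H=\partial([-1,1]^d)\cong S^{d-1}$ and $G,H$ share no $(d-1)$-face, so $\Sigma$ lies in the $(d-2)$-skeleton. The decisive feature of the hypothesis is the asymmetry in the last coordinate: because $d\notin\tilde I$ while $2d\in I$, the set $G$ contains the bottom face $F_{2d}=\{x_d=-1\}$ but \emph{not} the opposite face $F_d=\{x_d=+1\}$, and dually $H$ contains $F_d$ but not $F_{2d}$. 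I would stress this point, since it is exactly what drives the rest of the argument.

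The core step is to prove that $G$ is a PL $(d-1)$-ball. Every face $F_i$ with $i\in\tilde I$ is a ``wall'' $\{x_j=\pm1\}$ for some $j<d$, and it meets the bottom face $F_{2d}$ along the ridge $\{x_j=\pm1,\ x_d=-1\}$; crucially, its opposite ridge $\{x_j=\pm1,\ x_d=+1\}$ is a \emph{free} face of $G$, precisely because $F_d\notin G$. Pushing each wall downward in the $x_d$-direction realizes a deformation retraction of $G$ onto $F_{2d}$; after a compatible simplicial subdivision, the free top ridges initiate an elementary collapse of $G$ onto the ball $F_{2d}$. Thus $G$ is collapsible, and since a collapsible PL manifold is a PL ball (Whitehead), $G$ is a PL $(d-1)$-ball. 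Exchanging the roles of top and bottom gives the same conclusion for $H$. The case $d=2$ is degenerate: $G$ and $H$ are then arcs glued at two endpoints, and the statement is immediate.

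It remains to identify $\Sigma$ and build the homeomorphism. Since $G$ and $H$ are balls with disjoint interiors whose union is $S^{d-1}$, their interiors are the two components of $S^{d-1}\setminus\Sigma$, whence $\Sigma=\partial G=\partial H\cong S^{d-2}$. Finally, identifying $\partial G=\partial H=\Sigma$, the identity map of $\Sigma$ is a PL homeomorphism of the two boundaries; by the Alexander trick one cones $G$ and $H$ over an interior point and composes the two cone homeomorphisms, each of which fixes $\Sigma$ pointwise. This produces a PL homeomorphism $G\to H$ equal to the identity on $G\cap H$, which is exactly what the lemma asserts.

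I expect the main obstacle to be the ball-ness of $G$ and $H$, and in particular making the collapse rigorous: the delicate point is to verify that the absence of the opposite face ($F_d$ for $G$, $F_{2d}$ for $H$) genuinely supplies free faces at every stage of the collapse, rather than merely at the start, so that the whole of $G$ collapses onto a single face. Once this is in place, the boundary identification and the coning argument are routine.
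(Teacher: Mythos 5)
Your strategy is genuinely different from the paper's: the paper never needs to know that $G$ and $H$ are balls, because it builds the homeomorphism explicitly — a map $f_I$ tilts the cube into a convex polytope $\tilde B_I$ in which the faces indexed by $I$ point downwards and those indexed by $J$ point upwards, and then vertical projection (which fixes the silhouette $G\cap H$ pointwise) carries the one family of faces onto the other. Your route — show $G$ and $H$ are PL $(d-1)$-balls meeting exactly along their common boundary sphere $\Sigma$, then transfer by coning (Alexander trick) — is viable, and the final coning step is correct as stated. But as written the proposal has two genuine gaps, only one of which you flag.

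The first is the collapsibility of $G$, which you yourself identify as the main obstacle and then leave unresolved. ``Pushing each wall downward'' is a deformation retraction, which is far weaker than a collapse (contractible complexes need not be collapsible), and the free top ridges $\{x_j=\pm 1,\,x_d=1\}$ only furnish the \emph{first} round of elementary collapses: once the pairs (top ridge, wall) are removed, what is left of each wall is its lower-dimensional skeleton, including its intersections with adjacent walls, and your sketch says nothing about why free faces keep appearing there. The gap is fillable: add the walls to $F_{2d}$ one at a time; writing a wall as $W=Q\times[-1,1]$ with $Q\cong[-1,1]^{d-2}$ and the last factor the $x_d$-coordinate, its intersection with the union of $F_{2d}$ and the previously added walls is exactly $(Q\times\{-1\})\cup(A\times[-1,1])$ for a subcomplex $A\subset\partial Q$, and the standard relative collapse of a cylinder onto its base union a boundary piece (collapse the pairs $(\sigma\times\{1\},\,\sigma\times[-1,1])$ over the cells $\sigma\not\subset A$ in decreasing dimension) finishes the argument; the hypothesis $d\in J$ enters precisely to guarantee that no cell $\sigma\times\{1\}$ lies in any other facet of $G$, so that these faces really are free at every stage. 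The second gap you do not flag at all: Whitehead's theorem requires the collapsible complex to be a compact PL \emph{manifold}, and you never verify that $G$ is one. This is not automatic for a union of closed facets of a polytope — on the octahedron, two facets sharing only a vertex form a collapsible non-manifold. It does hold for the cube because the cube is a simple polytope: the link of a point in the relative interior of a $k$-face is the join of $S^{k-1}$ with a nonempty union of facets of the boundary of a $(d-k-1)$-simplex, which is a ball when the union is proper and a sphere otherwise. That computation is also what justifies your one-line identification $\partial G=\partial H=G\cap H$, so it must appear for both the appeal to Whitehead and the coning step to be legitimate. With these two points supplied your proof is complete, at the cost of invoking heavier PL machinery than the paper's elementary explicit construction.
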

The set $G$ contains the face on the bottom (the $2d$-th face), while $H$ contains the face on the top (the $d$-th face).
Using dilatations and rotations, we will use this lemma on parallelepipeds and the other directions, not only the last one. 
The key element is that there are two opposite faces such that $G$ and $H$ contain each one of them.
\begin{proof}
We begin with a transformation of the hypercube $B=[-1, 1]^d$. For $I$ as in the lemma, $x=(x_1, \ldots, x_d)\in B$, let
$$f_I(x_1, \ldots, x_d)=(g_I^1(x_1, x_d), g_I^2(x_2, x_d), \ldots, g_I^{d-1}(x_{d-1}, x_d), x_d),$$
with, for $i\in [1, d-1]$, 
$$g_I^i(y, z)=\left\{\begin{array}{ll}
\frac14(z+3)y & \mbox{if }\left| \begin{array}{l}
 i\in I  \mbox{ and } y\geq 0\\
\mbox{or}\\
i+d\in I  \mbox{ and } y\leq 0\end{array}\right.\vspace{4mm}\\

\frac14(-z+3)y &  \mbox{if }\left| \begin{array}{l}
 i\notin I\mbox{ and } y\geq 0\\
\mbox{or}\\
i+d\notin I  \mbox{ and } y\leq 0\end{array}\right.
\end{array} \right.$$

For $i\in [1, 2d]$, define the half-space $H_I^i$ by
\begin{eqnarray*}
H_I^d&=&\{x\in \R^d \mbox{ such that } x_d\leq 1\}\\
H_I^{2d}&=&\{x\in \R^d \mbox{ such that } x_d\geq -1\}\\
&&\mbox{and for }i \mbox{ different from }d \mbox{ and }2d:\\
H_I^i &=&\{x\in \R^d \mbox{ such that } x_i\leq\frac14(x_d+3)\}\mbox{ if } i\leq d, i\in I\\
H_I^i &=&\{x\in \R^d \mbox{ such that } x_{i-d}\geq-\frac14(x_d+3)\}\mbox{ if } i> d, i\in I\\
H_I^i &=&\{x\in \R^d \mbox{ such that } x_i\leq\frac14(-x_d+3)\}\mbox{ if } i\leq d, i\notin I\\
H_I^i &=&\{x\in \R^d \mbox{ such that } x_{i-d}\geq-\frac14(-x_d+3)\}\mbox{ if } i> d, i\notin I
\end{eqnarray*}
One can show that $f_I$ is a homeomorphism from $B$ to $\tilde B_I=\bigcap_{i=1}^{2d}H_I^i$. To define the inverse application of $f_I$, one would use
$\frac4{z+3}y$ and $\frac4{-z+3}y$ in replacement of the definition of $g_I^i$.

\begin{figure}
\begin{tikzpicture}[scale=0.93, every node/.style={scale=0.93}]
\draw[ultra thick](0,2)--(0,0)--(2,0);
\draw(2,0)--(2,2)--(0,2);
\draw[->] (2.1,1)--(3.5,1);
\node at (2.7,1.35){$f_{3, 4}$};
\node at (1.8,2.7){$H$};
\node at (0, -0.8) {$G$};
\draw[->] (1.8,2.5)--(1.6,2.1);
\draw[->] (0.1, -0.6)->(0.4,-0.1);
\draw[->](2,2.5) .. controls (2.2,2.3) and (2.3,1.9) .. (2.1,1.8);
\draw[->](-0.05,-0.6) ..controls (-0.3, -0.1) and (-0.3, 0.1).. (-0.1,0.3);
\draw(0.9, 1)--(1.1,1);
\draw(1,0.9)--(1,1.1);
\node at (1.2,1.2) {$O$};
\node at(2,-0.4) {$(1,-1)$};
\node at(0,2.3){$(-1,1)$};
\filldraw (2,0) circle (2pt);
\filldraw (0,2) circle (2pt);
\filldraw (0,0) circle (2pt);
\begin{scope}[shift={(3.4,0)}]
\draw[ultra thick](0,2)--(0.5,0)--(2,0);
\draw(2,0)--(1.5,2)--(0,2);
\draw[ ->] (2.1,1)--(3.5,1);
\node at (2.7,1.67){vertical};
\node at (2.7,1.3){projection};
\node at(0.8, 2.75) {$\tilde B_{3, 4}$};
\draw(0.9, 1)--(1.1,1);
\draw(1,0.9)--(1,1.1);
\node at (1.2,1.2) {$O$};
\node at(2,-0.4) {$(1,-1)$};
\node at(0.2,-.95){$(-0.5,-1)$};
\node at(0,2.3){$(-1,1)$};
\filldraw (2,0) circle (2pt);
\filldraw (0.5,0) circle (2pt);
\filldraw (0,2) circle (2pt);
\draw[->] (0.2,-0.77)--(0.44, -0.08);
\end{scope}

\begin{scope}[shift={(6.8,0)}]
\draw(0,2)--(0.5,0)--(2,0);
\draw[ultra thick](2,0)--(1.5,2)--(0,2);
\draw[->] (2,1)--(3.3,1);
\node at (2.7,1.35){$f_{3, 4}^{-1}$};
\draw(0.9, 1)--(1.1,1);
\draw(1,0.9)--(1,1.1);
\node at (1.2,1.2) {$O$};
\end{scope}

\begin{scope}[shift={(10.2,0)}]
\draw(0,2)--(0,0)--(2,0);
\draw[ultra thick](2,0)--(2,2)--(0,2);
\draw(0.9, 1)--(1.1,1);
\draw(1,0.9)--(1,1.1);
\node at (1.2,1.2) {$O$};
\end{scope}
\end{tikzpicture}

\caption{Homeomorphism between two sets of faces.}
\label{figHom}
\end{figure}
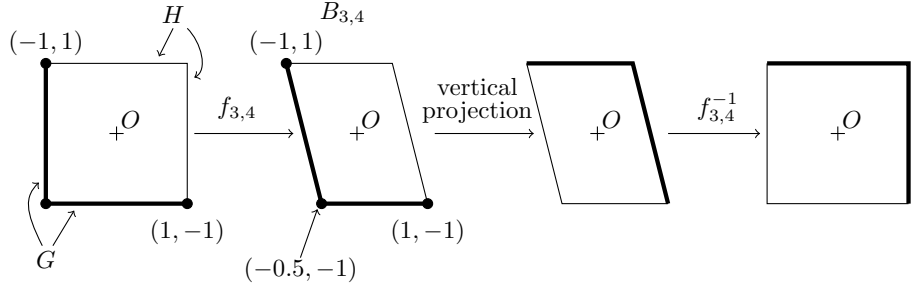

The set $\tilde B_I$ is a convex polyhedron. For $i\in [1, 2d]$, we denote by $\tilde F_i$ the face of $\tilde B_I$ included in $H_I^i$, face which can be showed to be the image of $F_i$ by $f_I$. For each $i\in I$, the outer vector of $\tilde F_i$ points downwards according to the last coordinate, whereas, for $i\notin I$, the outer vector of $\tilde F_i$ points upwards. Now there is a homeomorphism from $\tilde G=\bigcup_{i\in I}\tilde F_i$ to $\tilde H=\bigcup_{i\notin I}\tilde F_i$, simply by taking the projection of $\tilde G$ onto $\tilde H$ along the last dimension. This projection corresponds to the identity on the intersection $\tilde G\cap \tilde H$.
Applying now the inverse of $f_I$, we obtain a homoemorphism between $G$ and $H$ which is the identity on their intersection, as illustrated on figure~\ref{figHom}.
\end{proof}

\begin{proof}[Proof of Proposition \ref{propS}] For the ease of the exposition, we restrict ourselves to the case where $K\subset \Z_+^d$.
Define a sequence $(K_i)_{0\leq i\leq d}$ by
\begin{equation}
 K_i=\{x\in K : x_j=0\ \forall j>i\}.
\end{equation}
 We will build an increasing sequence of volume $(A_i)_{0\leq i\leq d}$ such that for each $i$, $A_i\cap \Zd=K_i$.
In order to achieve this, we will also use for each coordinate $i$  two sequences $(K_{i,n})_{n\leq\nii}$ and $(A_{i,n})_{i\leq \nii}$ 
which will be the transitions between $i$ and $i+1$. 
These sequences will satisfy
\begin{eqnarray}
 A_{i,n}\cap\Zd&=&K_{i,n}\\ \label{eqAK}
K_{i,0}&=&K_{i-1}\\
A_{i,0}&=&A_{i-1}\\
K_{i, \nii}&=&K_{i}\\
A_{i, \nii}&=&A_{i}
\end{eqnarray}
So we have $K_0=\{0\}$, and we take 
\begin{equation*}
 A_0=[-0.4, 0.4]^d.
\end{equation*}
Note that the origin is in the interior of $A_0$.
For $x\in \Z^d$ and $i\in[1, d]$, we shall make use of the boxes
$$B(x, i)=x+[-0.4,0.4]^{i-1}\times[-0.6, 0.4]\times [-0.4,0.4]^{d-i}.$$
We will start the following procedure with $i=1$ and $n=0$.

Let $K_{i, n}$ and $A_{i, n}$ be fixed. Define 
\begin{equation*}
 Y=\{x\in K_i : x_i=n+1 \}.
\end{equation*}
\begin{figure}
\begin{tikzpicture}[scale=0.89, every node/.style={scale=0.89}]

\draw[fill=gray!30](-0.4, -0.4)--(-0.4, 0.4)--(0.4, 0.4)--(0.4,-0.4)--(-0.4,-0.4);
\filldraw (0,0) circle (2pt);
\filldraw (1,0) circle (2pt);
\filldraw (2,0) circle (2pt);
\filldraw (3,0) circle (2pt);
\filldraw (4,0) circle (2pt);
\filldraw (1,1) circle (2pt);
\filldraw (2,1) circle (2pt);
\filldraw (4,1) circle (2pt);
\filldraw (2,2) circle (2pt);
\node at (0,0.7) {$A_0$};

\draw (6.6, -0.4)--(6.6, 0.4)--(7.4, 0.4)--(7.4,-0.4)--(6.6,-0.4);
\draw[fill=gray!30] (7.4, -0.4)--(7.4, 0.4)--(8.4, 0.4)--(8.4,-0.4)--(7.4,-0.4);
\draw[fill=gray!30] (8.4, -0.4)--(8.4, 0.4)--(9.4, 0.4)--(9.4,-0.4)--(8.4,-0.4);
\draw[fill=gray!30](9.4, -0.4)--(9.4, 0.4)--(10.4, 0.4)--(10.4,-0.4)--(9.4,-0.4);
\draw[fill=gray!30] (10.4, -0.4)--(10.4, 0.4)--(11.4, 0.4)--(11.4,-0.4)--(10.4,-0.4);
\filldraw (7,0) circle (2pt);
\filldraw (8,0) circle (2pt);
\filldraw (9,0) circle (2pt);
\filldraw (10,0) circle (2pt);
\filldraw (11,0) circle (2pt);
\filldraw (8,1) circle (2pt);
\filldraw (9,1) circle (2pt);
\filldraw (11,1) circle (2pt);
\filldraw (9,2) circle (2pt);
\node at (12.25, 0) {$A_1=A_{2, 0}$};

\draw (-0.4, -4.4)--(-0.4, -3.6)--(0.4, -3.6)--(0.4,-4.4)--(-0.4,-4.4);
\draw (0.4, -4.4)--(0.4, -3.6)--(1.4, -3.6)--(1.4,-4.4)--(0.4,-4.4);
\draw (1.4, -4.4)--(1.4, -3.6)--(2.4, -3.6)--(2.4,-4.4)--(1.4,-4.4);
\draw (2.4, -4.4)--(2.4, -3.6)--(3.4, -3.6)--(3.4,-4.4)--(2.4,-4.4);
\draw (3.4, -4.4)--(3.4, -3.6)--(4.4, -3.6)--(4.4,-4.4)--(3.4,-4.4);
\draw[fill=gray!30](0.6, -3.6)--(1.4, -3.6)--(1.4,-2.6)--(0.6,-2.6)--(0.6,-3.6);
\draw[fill=gray!30](1.6, -3.6)--(2.4, -3.6)--(2.4,-2.6)--(1.6,-2.6)--(1.6,-3.6);
\draw[fill=gray!30](3.6, -3.6)--(4.4, -3.6)--(4.4,-2.6)--(3.6,-2.6)--(3.6,-3.6);
\filldraw (0,-4) circle (2pt);
\filldraw (1,-4) circle (2pt);
\filldraw (2,-4) circle (2pt);
\filldraw (3,-4) circle (2pt);
\filldraw (4,-4) circle (2pt);
\filldraw (1,-3) circle (2pt);
\filldraw (2,-3) circle (2pt);
\filldraw (4,-3) circle (2pt);
\filldraw (2,-2) circle (2pt);
\node at (0,-3.3) {$A_{2, 0}^*$};

\begin{scope}[shift={(7,0)}]

\draw (-0.4, -4.4)--(-0.4, -3.6)--(0.4, -3.6)--(0.4,-4.4)--(-0.4,-4.4);
\draw (0.4, -4.4)--(0.4, -3.6)--(1.4, -3.6)--(1.4,-4.4)--(0.4,-4.4);
\draw (1.4, -4.4)--(1.4, -3.6)--(2.4, -3.6)--(2.4,-4.4)--(1.4,-4.4);
\draw (2.4, -4.4)--(2.4, -3.6)--(3.4, -3.6)--(3.4,-4.4)--(2.4,-4.4);
\draw (3.4, -4.4)--(3.4, -3.6)--(4.4, -3.6)--(4.4,-4.4)--(3.4,-4.4);
\draw(0.6, -3.6)--(1.4, -3.6)--(1.4,-2.6)--(0.6,-2.6)--(0.6,-3.6);
\draw(1.6, -3.6)--(2.4, -3.6)--(2.4,-2.6)--(1.6,-2.6)--(1.6,-3.6);
\draw(3.6, -3.6)--(4.4, -3.6)--(4.4,-2.6)--(3.6,-2.6)--(3.6,-3.6);
\draw[fill=gray!30](1.4, -2.6)--(1.6, -2.6)--(1.6,-3.6)--(1.4,-3.6)--(1.4,-2.6);
\node at (5,-3.3) {$A_{2,1}$};
 \filldraw (0,-4) circle (2pt);
\filldraw (1,-4) circle (2pt);
\filldraw (2,-4) circle (2pt);
\filldraw (3,-4) circle (2pt);
\filldraw (4,-4) circle (2pt);
\filldraw (1,-3) circle (2pt);
\filldraw (2,-3) circle (2pt);
\filldraw (4,-3) circle (2pt);
\filldraw (2,-2) circle (2pt);
\end{scope}

\begin{scope}[shift={(0,-4)}]

\draw (-0.4, -4.4)--(-0.4, -3.6)--(0.4, -3.6)--(0.4,-4.4)--(-0.4,-4.4);
\draw (0.4, -4.4)--(0.4, -3.6)--(1.4, -3.6)--(1.4,-4.4)--(0.4,-4.4);
\draw (1.4, -4.4)--(1.4, -3.6)--(2.4, -3.6)--(2.4,-4.4)--(1.4,-4.4);
\draw (2.4, -4.4)--(2.4, -3.6)--(3.4, -3.6)--(3.4,-4.4)--(2.4,-4.4);
\draw (3.4, -4.4)--(3.4, -3.6)--(4.4, -3.6)--(4.4,-4.4)--(3.4,-4.4);
\draw(0.6, -3.6)--(1.4, -3.6)--(1.4,-2.6)--(0.6,-2.6)--(0.6,-3.6);
\draw(1.6, -3.6)--(2.4, -3.6)--(2.4,-2.6)--(1.6,-2.6)--(1.6,-3.6);
\draw(3.6, -3.6)--(4.4, -3.6)--(4.4,-2.6)--(3.6,-2.6)--(3.6,-3.6);
\draw(1.4, -2.6)--(1.6, -2.6);
\draw[fill=gray!30](1.6,-2.6)--(2.4, -2.6)--(2.4,-1.6)--(1.6,-1.6)--(1.6,-2.6);
\node at (4,-2.1) {$A_{2, 1}^*=A_{2, 2}=A_2$};
\filldraw (0,-4) circle (2pt);
\filldraw (1,-4) circle (2pt);
\filldraw (2,-4) circle (2pt);
\filldraw (3,-4) circle (2pt);
\filldraw (4,-4) circle (2pt);
\filldraw (1,-3) circle (2pt);
\filldraw (2,-3) circle (2pt);
\filldraw (4,-3) circle (2pt);
\filldraw (2,-2) circle (2pt);
\end{scope}

\end{tikzpicture}
\caption{Example for the sequences $(A_i)$, $(A^*_{i, n})$ and $(A_{i, n})$ in dimension two.}
\label{figUn}
\end{figure}
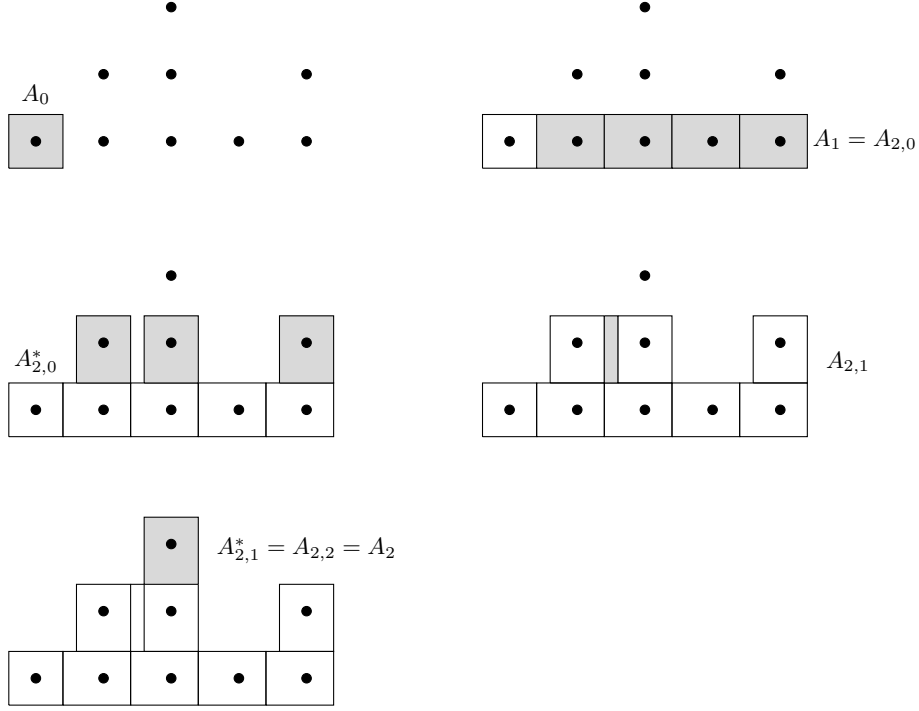
If $Y$ is not empty, we let 
$$K_{i, n+1}=K_{i, n}\cup Y,$$
and $$A^*_{i, n}=A_{i, n}\cup\{ B(x, i)\mbox{ for }x\in Y\}.$$
For $x$, $y$ distinct vertices in $Y$, $B(x, i)$ and $B(y, i)$ do not intersect. Lets take $Y'\subset Y$, $Y'\neq Y$ and $x\in Y\setminus Y'$. 
The intersection between $B(x, i)$ and $A_{i, n}\cup\{ B(y, i)\mbox{ for }y\in Y'\}$ is simply $B(x, i)\cap A_{i, n}$. 
Since, by definition of a descending set, $x-e_i$ is in $K_{i, n}$, we have $B(x-e_i, i)\subset A_{i, n}$ and
$$B(x, i)\cap A_{i, n}=x+[-0.4,0.4]^{i-1}\times\{-0.6\}\times [-0.4,0.4]^{d-i},$$
which is the $(i+d)$th face of $B(x, i)$.
By lemma~\ref{lemCube}, there is a homeomorphism between this face of $B(x, i)$ to the union of its other faces, homeomorphism that is the identity on the intersection of these two sets of faces. 
So each time we add a box $B(x, i)$ with $x\in Y$, the surfaces of the sets remain homeomorph, and by iteration $\partial A^*_{i, n}$ is homeomorph to $\partial A_{i, n}$. 
However the set $A^*_{i, n}$ does not fill all our requirements, as its surface intersects the bonds between neighbour vertices of $Y$. So we have to enhance this set before obtaining $A_{i, n+1}$.

A representation of the set of neighbour vertices in $Y$ is
$$\Gamma=\{(x, k), x\in Y, k\in [1, i-1] \mbox{ such that } x+e_k \in Y\},$$
where we have used the fact that for all vertex $x$ in $Y$, $x_i$ is constant (and equals to $n+1$), and $x_j=0$ for $j>i$.
For $(x, k)\in \Gamma$, let
\begin{eqnarray*}
B(x, k, i)&=&x+ [-0.4, 0.4]^{k-1}\times [0.4, 0.6]\\
&&\quad\times [-0.4, 0.4]^{i-k-1}\times [-0.6, 0.4]\times [-0.4, 0.4]^{d-i},
\end{eqnarray*}
and define
\begin{eqnarray*}
A_{i, n+1}=A^*_{i, n}\cup\bigcup_{(x, k)\in \Gamma} B(x, k, i).
\end{eqnarray*}
The box $B(x, k, i)$ will serve as a bridge between $B(x, i)$ and $B(x+e_k, i)$.
One can see an example of it on the fourth panel of figure~\ref{figUn}, at the step $A_{2, 1}$.
For $(x, k)$, $(x', k')$ two distinct elements of $\Gamma$, $B(x, k, i)$ and $B(x', k', i)$ do not intersect. Lets take $\Gamma'\subset \Gamma$, $\Gamma'\neq \Gamma$, $(x, k)\in \Gamma\setminus\Gamma'$, and define
$$A^{*,\Gamma'}_{i, n}=A^*_{i, n}\cup\bigcup_{(x', k')\in \Gamma'} B(x', k', i).$$
Due to the preceding remark, 
\begin{eqnarray*}
A^{*,\Gamma'}_{i, n}\cap B(x, k, i)&=&A^*_{i, n}\cap B(x, k, i)\\
&=&x+\huge([-0.4, 0.4]^{k-1}\times [0.4, 0.6]\\
&&\quad\quad\times [-0.4, 0.4]^{i-k-1}\times \{-0.6\}\times [-0.4, 0.4]^{d-i}\\
&&\quad\cup[-0.4, 0.4]^{k-1}\times \{0.4\}\times [-0.4, 0.4]^{i-k-1}\\
&&\quad\quad\times [-0.6, 0.4]\times [-0.4, 0.4]^{d-i}\\
&&\quad\cup[-0.4, 0.4]^{k-1}\times \{ 0.6\}\times [-0.4, 0.4]^{i-k-1}\\
&&\quad\quad\times  [-0.6, 0.4]\times [-0.4, 0.4]^{d-i}\huge)\\
\end{eqnarray*}
The intersection was decomposed on $A_{i, n}\cap B(x, k, i)$, $B(x, i)\cap B(x, k, i)$ and $B(x+e_k, i)\cap B(x, k, i)$. 
We can apply lemma~\ref{lemCube} again, implying that the surface of $A^{*,\Gamma'}_{i, n}\cup B(x, k, i)$ is homeomorph to $A^{*,\Gamma'}_{i, n}$, 
and by iteration $A_{i, n+1}$ is homeomorph to $A^*_{i, n}$.

If $Y$ is empty, we let $n_i=n$, and by definition of a descending set, we have indeed $K_{i, n_i}=K_i$. 
If $i<d$, we follow the same instructions, simply incrementing $i$ to $i+1$ and resetting $n$ to $0$. 
If $i=d$, then the algorithm is finished. On figure~\ref{figUn} one can see that for $A^*_{i, n}$ we add boxes around the vertices just above $A_{i, n}$, 
and then we fill the gapes to get $A_{i, n+1}$.



At the end of the previous algorithm, we have that $A_d$ contains $K_d= K$. 
Since $S_{1, 0}=\partial A_0$ is homeomorph to a sphere, by an immediate recurrence $S:=\partial A_d$ is homeomorph to a sphere. 
Let us consider two neighbour vertices $x$ and $y$ in $K$, take $i$ the smallest integer such that the two vertices are in $K_i$, and suppose to simplify that the coordinates of $x$ are smaller than the ones of $y$. There are three cases:
\begin{enumerate}
\item If $x_i=0$, then $x\in K_j$ for a certain $j<i$. In this case, the boxes $B(x, j)$ and $B(y, i)$ are in $A_d$, and the bond $\langle x, y \rangle$ is contained in the union of these two boxes.
\item If $x_i>0$ and $x_i<y_i$, then the boxes $B(x, i)$ and $B(y, i)$   are in $A_d$, and the bond $\langle x, y \rangle$ is contained in the union of these two boxes.
\item If $x_i>0$ and $x_i=y_i$,  then the boxes $B(x, i)$, $B(y, i)$ and $B(x, i, x_i)$   are in $A_d$, and the bond $\langle x, y \rangle$ is contained in the union of these three boxes.
\end{enumerate}
Hence the surface of $A_d$ does not intersect bonds relying two vertices of $K$. 
Since $d_\infty(A_d, K)<1$, the surface does not intersect bonds between vertices that are both outside $K$. 
To conclude, the surface of $A_d$, which is homeomorph to a sphere, intersects only bonds that have one endvertex in $K$ and the other outside $K$.

\end{proof}
\section{Good paths}\label{secP}
This section finishes the proof of theorem~\ref{thUn}.
We give a definition for \emph{good paths} which will generate more paths than just the open paths, 
and such that, according to the previous section, 
the set attained from the origin will be enclosed in a sphere intersecting only its outer bonds, these bonds being closed.

\begin{defi}\label{defGP}
A path $(0=\nu_0, \nu_1,\dots,\nu_k)$ in $\Z^d$ is called a good path if for every $i$, $1\leq i\leq k-1$,
either the edge $\langle\nu_{i-1}, \nu_i\rangle$ is open, or 
$\nu_i\preceq\nu_{i-1}$.
\end{defi}
From this definition and proposition~\ref{propS}, we obtain as in \cite{MR2721052}:
\begin{lem}\label{lemSurf}
 Let $K$ be the random set of vertices $x$ such that there exists a good path from $0$ to $x$. If $K$ 
is finite, there exists a sphere intersecting only closed bonds and containing 0 in its inside.
\end{lem}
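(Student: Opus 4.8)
The plan is to show that, on the event that $K$ is finite, $K$ is almost surely a \emph{descending set}, apply Proposition~\ref{propS} to enclose it in a sphere meeting exactly the bonds running from $K$ to $K^c$, and then argue that every such boundary bond is closed.

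First I would check that $K$ is a descending set. By hypothesis $K$ is finite, and it contains $0$ since the one-point path $(0)$ is trivially good. It remains to verify the closure property: if $x\in K$ and $y\preceq x$, then $y\in K$. Fix a good path $(0=\nu_0,\dots,\nu_m=x)$, and recall that $y\preceq x$ means $y$ sits on the broken line joining $x$ to the origin, contracting coordinates inward starting from the last nonzero one. This broken line passes through integer points, and each of its unit steps decreases the current last nonzero coordinate by one toward $0$; writing such a step as $p\mapsto p-\sgn(p_{n(p)})\,e_{n(p)}$, one checks straight from the definition of $\preceq$ that the image $p'$ satisfies $p'\preceq p$ (the single modified coordinate shrinks in modulus and keeps its sign, while every coordinate of index $<n(p')\le n(p)$ is untouched). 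Concatenating the broken line from $x$ down to $y$ after $(\nu_0,\dots,\nu_m)$ therefore produces a path all of whose appended steps are descending, hence a good path ending at $y$, so $y\in K$. If one insists that good paths be self-avoiding, note that erasing any loop from a good path leaves every surviving edge open or descending, hence still good; so one may always pass to a self-avoiding good path with the same endpoint, and $K$ is unchanged. Thus $K$ is descending.

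Next I would apply Proposition~\ref{propS} to the descending set $K$: it produces a sphere $S\subset\R^d$ meeting every bond of $\CE$ (the bonds with one endvertex in $K$ and the other in $K^c$) and no other bond. The crux is to see that each bond of $\CE$ is closed. Suppose, for contradiction, that some $\langle u,v\rangle\in\CE$ with $u\in K$ and $v\in K^c$ were open. Taking a good path from $0$ to $u$ and appending the open edge $\langle u,v\rangle$ yields a good path from $0$ to $v$ (the extra step is along an open edge), forcing $v\in K$ and contradicting $v\in K^c$. Hence every bond met by $S$ lies in $\CE$ and is closed, so $S$ intersects only closed bonds.

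Finally, since $0\in K$ and, in the construction underlying Proposition~\ref{propS}, the finite set $K$ is contained in the solid $A_d$ bounded by $S$ (with $0$ already interior to the initial box $A_0=[-0.4,0.4]^d$), the origin lies in the inside of $S$. This exhibits a sphere intersecting only closed bonds with $0$ in its inside, as claimed. The only genuinely delicate point is the verification that $K$ is descending, namely that the inward broken-line steps are legitimate good-path steps and that loop-erasure preserves goodness; once this is secured, the sphere and the closedness of its bonds follow immediately from Proposition~\ref{propS} and the one-line extension argument.
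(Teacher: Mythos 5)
Your proof is correct and takes essentially the same approach as the paper, whose own proof is a one-line observation that every bond of $\CE$ is closed by Definition~\ref{defGP}, followed by an appeal to Proposition~\ref{propS}. The extra steps you spell out---that $K$ is a descending set (so Proposition~\ref{propS} applies) and that $0$ lies in the inside of the resulting sphere---are precisely the details the paper leaves implicit, and you verify them correctly.
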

\begin{proof}
By definition~\ref{defGP}, all the bonds in $\CE$ (as defined in proposition~\ref{propS}) are closed, and so this lemma is a consequence of proposition~\ref{propS}.
\end{proof}

\begin{proof}[Proof of Theorem~\ref{thUn}]
Let $r>0$ be an integer, and $N_p(r)$ the number of good paths that start at $0$ and end on $\{x\in\Z^d : \|x\|_1=r\}$.  Then
$$P(\rad [K]\geq r)\leq \E_p(N_p(r)).$$
For any good path $\pi$ with vertices $0, \nu_1, \ldots, \nu_n=u$ with $\|u\|_1=r$, we say that $\pi$ has length $n$ and we let
\begin{eqnarray*}
A&=&\#\{i : \langle \nu_{i-1}, \nu_i\rangle \mbox{ is not descending, that is }\nu_i\not\preceq \nu_{i-1}\}\\
B&=&\#\{i : \langle \nu_{i-1}, \nu_i\rangle \mbox{ is descending, that is }\nu_i\preceq \nu_{i-1}\}.
\end{eqnarray*}

As $n-r$ is even, we can let $m$ be the integer such that $n=r+2m$, and we have the following :
\begin{eqnarray*}
A+B&=&n\\
B&<&\frac{n}2.
\end{eqnarray*}
Remark that once we know $r$, $m$ and $B$, the values of $n$ and $A$ are determined.
Let $M$ be a large even integer to be precised later. We decompose the set of paths as follows:
\begin{eqnarray*}
\E(N_p(r))&\leq&\sum_{m\geq0}\sum_{i=0}^{M/2-1}\sum_{\substack{ B\geq \frac iM(r+2m)\\ B<\frac{i+1}M(r+2m)}} N(A, B)p^{A}.
\end{eqnarray*}
Here $N(A, B)$ is the number of self-avoiding paths having $(A, B)$ for characteristics. 
With the second and the third summation, $B$ runs through the interval $[0, n/2[$.
When $B<\frac{i+1}M(r+2m)$, we have
$$A>r+2m-\frac{i+1}M(r+2m),$$
and so
\begin{eqnarray}\label{eqP}
p^{A}<p^{(r+2m)(1-\frac{i+1}M)}.
\end{eqnarray}
Now to provide an upper bound on $N(A, B)$, we simply consider the paths of length $A+B$ that cannot return immediately to the previous vertex (hence $2d-1$ choices after the first) 
and with $B$ descending steps, that is to say $B$ edges $\langle \nu_{i-1}, \nu_i\rangle$ such that $\nu_i\preceq \nu_{i-1}$. 
Let $G_n$ be the set of paths of length $n$ that do not return immediately to the previous vertex, 
and for $\alpha\in [0, 0.5]$, let $G_n(\alpha)$ the subset of $G_n$ of the paths having at least $\alpha n$ descending steps. 
We have
\begin{eqnarray}
\#G_n=2d(2d)^{n-1}\leq 2(2d-1)^n.\label{eqGn}
\end{eqnarray}
Recall that $n=A+B$, so 
\begin{eqnarray}\label{eqGBn}
\#G_n(B/n)\geq N(A, B). 
\end{eqnarray}

For a path $\pi$ in $G_n$ with vertices  $0, \nu_1, \ldots, \nu_n$, define the variables $(Y_i)_{i=1, \dots, n}$ by 
$$Y_i=\left\{ \begin{array}{ll}
1 &\mbox{if }\langle \nu_{i-1}, \nu_i\rangle \mbox{ is descending}\\
0 &\mbox{otherwise.}
\end{array}
   \right.$$
We will always have $Y_1=0$.
Let another sequence of variable $(Z_i)_{i=1, \dots, n}$, independent of the $Y_i$'s, distributed independently according to a Bernoulli of parameter $\frac1{2d-1}$. 
Consider that $\pi$ was chosen at random and uniformly in $G_n$.
At each step after the first, the path $\pi$ has $2d-1$ equally probable possibilities, among which at most one will give a bad step. Hence for each $i$ in $[1,n]$, 
$$P(Y_i=1\mid Y_1, \ldots, Y_{i-1}) \leq \frac{1}{2d-1}.$$
We can use a coupling between $(Y_i)$ and $(Z_i)$ via uniform variables (as one does to compare two binomials) and then apply the Cramer-Chernov large deviations on $(Z_i)$ (see for example \cite{MR978907}). 
Hence, for $\alpha>1/(2d-1)$,
\begin{eqnarray}\label{eqnH1}
P\left(\sum_{i=1}^n Y_i\geq \alpha n\right)\leq P\left(\sum_{i=1}^n Z_i\geq \alpha n\right)\leq \exp -nH, 
\end{eqnarray}
with
\begin{eqnarray}
H&=&\alpha\log\alpha+(1-\alpha)\log(1-\alpha)+\alpha\log(2d-1)-(1-\alpha)\log\left(1-\frac1{2d-1}\right)\nonumber\\
&\geq&-\log(2)+\alpha\log(2d-1)+(1-\alpha)\log\left(\frac{2d-1}{2d-2}\right)\nonumber\\
&=&-\log(2)+\log(2d-1)-(1-\alpha)\log\left(2d-2\right)\label{eqHn}
\end{eqnarray}
Using the lower bound~\eqref{eqHn} instead of $H$, inequality~\eqref{eqnH1} stands for all $\alpha\in[0, 0.5]$ 
(and is trivial for $\alpha\leq 1/(2d-1)$ since in that case the lower bound is negative).
With \eqref{eqGn}, this gives for all $i$ in $[0, M/2-1]$, 
\begin{eqnarray}
\#G_n\left(\frac{i}{M}\right)&\leq&  2^{n+1}(2d-2)^{n(1-\frac{i}M)}\label{eqGn3}
\end{eqnarray}
which, with \eqref{eqP} and \eqref{eqGBn}, implies
\begin{eqnarray*}
\E(N_p(r))&\leq&\sum_{m\geq0}\sum_{i=0}^{M/2-1}\left(\frac1{M}(r+2m)+1\right)2^{r+2m+1}\\
&&\quad\quad\times(2d-2)^{(r+2m)(1-\frac{i}M)}p^{(r+2m)(1-\frac{i+1}M)}\\
&=&\sum_{m\geq0}\sum_{i=0}^{M/2-1}\left(\frac1{M}(r+2m)+1\right)2^{r+2m+1}\\
&&\quad\quad\times\left((2d-2)^{\frac{M-i}{M-i-1}}p\right)^{(r+2m)(1-\frac{i+1}M)}
\end{eqnarray*}

Now fix the dimension $d$, and take $p$ such that $p<\frac1{8(d-1)}$. Let $M$ be an even integer large enough such that 
$$(2d-2)^{1+\frac2{M-2}}<\frac1{4p}.$$ To simplify calculations, we let $b=(2d-2)^{1+\frac2{M-2}}p.$
We obtain

\begin{eqnarray*}
\E(N_p(r))&\leq&\sum_{m\geq0}\sum_{i=0}^{M/2-1}\left(\frac1{M}(r+2m)+1\right)2^{r+2m+1}\\
&&\quad\quad\times  b^{(r+2m)(1-\frac{i+1}M)}\\
&=&\sum_{m\geq0}  \left(\frac1{M}(r+2m)+1\right)2^{r+2m+1} b^{r+2m}    \\
&&\quad\quad  \times \sum_{i=0}^{M/2-1}  b^{-(r+2m)\frac{i+1}M}\\
&\leq&\sum_{m\geq0}  \left(\frac1{M}(r+2m)+1\right)2^{r+2m+1} b^{r+2m}  b^{-r/2-m}\frac{b}{1-b}\\
&=&\frac{b}{1-b}2^{r+1}\sum_{m\geq0} \left(\frac1{M}(r+2m)+1\right)4^{m} b^{r/2+m}
\end{eqnarray*}

For $r$ large enough such that $(2d-2)^{1+\frac2{r-1}}<\frac1{4p},$ we can take $M=r$ when $r$ is even, and $M=r-1$ when $r$ is odd, and we obtain
\begin{eqnarray*}
\E(N_p(r))&\leq&\frac{b^{\frac{r}2+1}}{1-b}2^{r+1}\frac{2}{r-1}\sum_{m\geq0}(r+2m)4^mb^{m}\\
&=&\frac{4b}{(1-b)(1-4b)}\left(1+\frac{1+4b}{(r-1)(1-4b)}\right)(2\sqrt{b})^r,
\end{eqnarray*}
which converges exponentially fast towards $0$ since we have taken $b<\frac14$. 
This gives the exponential bound~\eqref{eqRad} on the radius of the sphere
of theorem~\ref{thUn}. 
By the first Borel-Cantelli lemma, the set of vertices attained by good paths from the origin is a.s. finite, and we get 
the lower bound~\eqref{eqPCS} on the critical point $\pcs$
with the help of lemma~\ref{lemSurf}.
\end{proof}
\section{Improvement via large deviations on a Markov chain}\label{secMarkov}
Theorem~\ref{thUn} already gives as a corollary that $p_e^1\geq 1/16$. 
We can improve this lower bound by studying more precisely the cardinal of $G_n(\alpha)$ with the help of a Markov chain.
Lets first define a chain with three states, $W_1$, $W_2$ and $W_3$. 
For any site $x$ in $\Z^d\setminus\{0\}$, we recall that its descending edge is the edge $\langle x, x-\sgn(x_{n(x)})e_{n(x)}\rangle$ (as usual $n(x)$ is the index of the last non-null element for $x$). 
If $x=0$, there is no descending edge. Furthermore, we call an edge $e$ an ascending edge if $-e$ is the descending edge of $x+e$. If $n(x)\neq d$, there is more than one ascending edge.
Actually, all edges (and their opposites) after $e_{n(x)}$ are ascending edges. In particular, if $x=0$, all the edges are ascending. An edge that is neither ascending nor descending is called a neutral edge.

For an infinite immediate self-avoiding walk $(Z_i)_{i\geq 0}$, that is a path that cannot return immediately to its previous site, with $Z_0=0$, 
consider its $i$th edge $u_i$ and define $(\tilde X_i)_{1\leq i}$ by  
\begin{itemize}
\item $\tilde X_i=W_1$ is $u_i$ is a neutral edge.
\item  $\tilde X_i=W_2$ if $u_i$ is an ascending edge.
\item $\tilde X_i=W_3$ if $u_i$ is the descending edge.
\end{itemize}
The sequence $(\tilde X_i)$ is not Markovian (one would have to add the current position of the path to get a Markovian couple). 
Define now a Markov chain, denoted $(X_i)$, also on the three states $W_1$, $W_2$ and $W_3$, and which will be related to $(\tilde X_i)$. The initial state $X_1$ is taken to $W_2$ (although it is not important), and the transition matrix
of $(X_i)$ is taken equal to :

\begin{eqnarray*}
\pi=\left(\begin{array}{ccc}
\frac{2d-3}{2d-1}&\frac1{2d-1}&\frac1{2d-1}\\
\frac{2d-2}{2d-1}&\frac1{2d-1}&0\\
\frac{2d-2}{2d-1}&0&\frac1{2d-1}
\end{array}\right)
\end{eqnarray*} 

To get a better understanding of the similarity between these two chains, we describe the general behaviour of $(\tilde X_i)$ when the current vertex of the path is not on the hyperplane $x_d=0$. 
Once in state $W_1$, there are $2d-3$ edges that let $\tilde X_{i+1}$ in state $W_1$, one edge setting $\tilde X_{i+1}$ in state $W_2$ and one edge setting $\tilde X_{i+1}$ in state $W_3$. 
Once in state $W_2$, there is one edge, the same as the preceding step, that let $\tilde X_{i+1}$ in state $W_2$, and $2d-2$ setting $\tilde X_{i+1}$ in state $W_1$. 
Finally, if $\tilde X_i$ is in state $W_3$, there is one edge that let $\tilde X_{i+1}$ in state $W_3$ and $2d-2$ edges setting $\tilde X_{i+1}$ in state $W_1$. 

So the sequence $(\tilde X_i)$ seems to have the same law as $(X_i)$. 
Unfortunately this is not the case. When the last edge used by $(\tilde X_i)$ is $-e_d$, that the last coordinate of the corresponding vertex is null and the penultimate is strictly positive, there are at least two possibilities 
for $X_{i+1}$ to be in state $W_2$, namely $e_{d-1}$ and $-e_d$, and one to be in state $W_3$, namely $-e_{d-1}$. 

Hence the sequences $(X_i)$ and $(\tilde X_i)$ are not identical in law, but
it is possible to define a coupling between the random path and $(X_i)$ with the property that 
if $\tilde X_i$ is in state $W_1$, then $X_i$ is in state $W_1$ or $W_3$, and if $\tilde X_i$ is in state $W_3$, then $X_i$ is in state $W_3$.
As a consequence, the time spent in the state $W_3$ is greater or equal for $(X_i)$ than for $(\tilde X_i)$.

We use an i.i.d. sequence $(U_i)_{i\geq 2}$ of uniform random variables on $[0, 1]$. For $i\geq 2$,
we let $a_1(i)=P(X_i=W_1\mid X_{i-1})$ and  $a_2(i)=P(X_i=W_2\mid X_{i-1})$.
These quantities are actually random variables. We recall that we had arbitrarily taken $X_1=W_2$.
Now we apply the following rules:
\begin{itemize}
\item If $U_i<a_2(i)$, we set $X_i$ in the state $W_2$. 
\item If $U_i\in [a_2(i), a_2(i)+a_1(i)[$, we set $X_i$ in the state $W_1$.
\item Otherwise, we set $X_i$ in the state $W_3$.
\end{itemize}
Concerning the random path, always for $i\geq 2$, we let $\tilde a_1(i)=P(\tilde X_i=W_1\mid Z_{i-2}, Z_{i-1})$
and $\tilde a_2(i)=P(\tilde X_i=W_2\mid Z_{i-2}, Z_{i-1})$. We recall that $Z_0=0$ and that we always have $\tilde X_1=W_2$. The path chooses for its first step a random edge taken uniformly among the $2d$ possibilities. For the subsequent steps, the rules are:
\begin{itemize}
\item If $U_i<\tilde a_2(i)$, the path takes uniformly one of the ascending edges. This implies that $\tilde X_i$ is in the state $W_2$. 
\item If $U_i\in [\tilde a_2(i), \tilde a_2(i)+\tilde a_1(i)[$, the path takes uniformly one of the neutral edges. Hence $\tilde X_i$ is in the state $W_1$.
\item Otherwise the path takes the descending edge, and so $\tilde X_i$ is in the state $W_3$.
\end{itemize}
In that way we have a coupling between the random path and the Markov chain $(X_i)$. 
We prove now by recurrence the two following properties: if $X_i$ is in state $W_2$, so is $\tilde X_i$, 
and if $\tilde X_i$ is in state $W_3$, so is $X_i$. 
These properties are true for $i=1$ since $X_1$ and $\tilde X_1$ are in state $W_2$. 
Suppose they are true at step $i-1$. The possible configurations for the couple
$(\tilde X_{i-1}, X_{i-1})$ are $(W_2, W_2)$, $(W_2, W_1)$, $(W_2, W_3)$, $(W_1, W_1)$, $(W_1, W_3)$ and $(W_3, W_3)$. 
In all these cases, we have $a_2(i)\leq \tilde a_2(i)$ and $a_2(i)+a_1(i)\leq \tilde a_2(i)+\tilde a_1(i)$. So, according to the coupling described, if $X_i$ is in state $W_2$, that means $U_i<a_2(i)$, and so $\tilde X_i$ is equally in state $W_2$. If $\tilde X_i$ is in state $W_3$, that means 
$U_i\geq \tilde a_2(i)+\tilde a_1(i)$, and so $X_i$ is also in state $W_3$, and the two properties hold by recurrence. As previously claimed, we obtained a coupling between the path and $(X_i)$, 
with $(X_i)$ spending more time in $W_3$ than $(\tilde X_i)$.


We now use large deviations techniques on the Markov chain $(X_i)$, as explained in sections $3.1.1$ and $3.1.3$ of~\cite{MR1619036}.
If a path of length $n$ coupled to the Markov chain $(X_i)_{1\leq i\leq n}$ has at least a proportion of $\alpha$ descending edges, 
then 
$$\sum_{i=1}^n \mathbbm{1}_{X_i=W_3}\geq \alpha n.$$
This event is controlled by large deviations, the rate function being the infinimum, with respect to the parameters $a$, $b$, $c$ and $x$, of the entropy of distributions of the type
\begin{eqnarray*}\displaystyle
 q=\left(\begin{array}{ccc}
a&b&x\\
b&c&0\\
x&0&\alpha-x
\end{array}\right)
\end{eqnarray*}
with respect to $\pi$. The matrix $q$ is taken of this form since when $\pi(i, j)$ is null, $q(i, j)$ must also be null, 
and for each $j=1, 2, 3$, the sum of the $j$-th line must be equal to the sum of the $j$-th column.
As this matrix  is a representation of a distribution, we have the constraint $a+2b+c+x+\alpha=1$, and all the elements of the matrix are of course positive.
The formula of the entropy is
\begin{eqnarray*}
H(q, \pi)&=&\sum_{i=1}^3\sum_{j=1}^3 q(i, j)\log\frac{q(i, j)}{q_1(i) \pi(i, j)},
\end{eqnarray*}
with $q_1(i)=\sum_{j=1}^3 q(i, j).$ This gives
\begin{eqnarray*}
 H(q, \pi)&=&a\log\left(\frac{a(2d-1)}{(2d-3)(a+b+x)}\right)+b\log\left(\frac{b(2d-1)}{a+b+x}\right)\\
 &&+x\log\left(\frac{x(2d-1)}{a+b+x}\right)\\
 &&+b\log\left(\frac{b(2d-1)}{(b+c)(2d-2)}\right)+c\log\left(\frac{c(2d-1)}{b+c}\right)\\
 &&+x\log\left(\frac{x(2d-1)}{\alpha(2d-2)}\right)+(\alpha-x)\log\left(\frac{(\alpha-x)(2d-1)}{\alpha}\right).
\end{eqnarray*}
We let $\sigma_d(\alpha)$ be the infinimum of these entropies,
and denote $\sigma_d=\sigma_d(0.5)$, as this particular value will appear important.
Large deviations results on Markov chains imply that
\begin{eqnarray}\label{eqLDM}
G_n(\alpha)&\leq &2\cdot (2d-1)^n \cdot \exp(-n\sigma_d(\alpha)).
\end{eqnarray}


We searched a solution for $\sigma_d(\alpha)$ with the three variables $b$, $c$ and $x$, but the derivatives yield a non-linear system of three equations with three variables, 
that we couldn't solve. It is however possible to get numerically a lower bound for $\sigma_d(\alpha)$.

We finish to explain now the procedure for the dimension~$3$. In this case, the transition matrix is 
\begin{eqnarray*}
\pi=\left(\begin{array}{ccc}
\frac35&\frac15&\frac15\\
\frac45&\frac15&0\\
\frac45&0&\frac15
\end{array}\right)
\end{eqnarray*} 

As an example, consider the value $\alpha=0.5$. 
We let $f(x, b, c)$ the function for which we search a lower bound.
For a block $[x_1, x_2]\times[b_1, b_2]\times [c_1, c_2]$, 
we can get a lower bound for $f$ using either 
\begin{itemize}
 \item the monotony of its parts. For example $x\log(x)\geq x_2\log(x_2)$ if $x_2\leq \exp(-1)$, $x\log(x)\geq x_1\log(x_1)$ if $x_1\geq \exp(-1)$, and $x\log(x)\geq -\exp(-1)$ in the third case;
 \item the value of $f(x_1, b_1, c_1)$ and a lower bound of the negative parts of the gradient of $f$ on the block;
 \item the value of $f$ and its gradient at the point $(x_1, b_1, c_1)$, together with a lower bound of the negative parts of the Hessian of $f$ on the block.
\end{itemize}
Starting with the block $[0,0.5]\times [0, 0.25]\times [0,0.5]$ which covers the set of definition of $f$, we calculate the best lower bound 
among the three possibilities just described. If the lower bound is less than $0.24857770256$ (a candidate value obtained with gradient search), 
we split the block in two, cycling over the axes $x$, $b$ and $c$, and we reiterate the procedure.
With this method, we effectively obtain that $\sigma_3\geq 0.24857770256$. 
We note that it is a good approximation, since we have
$f(0.24582, 0.035321, 0.005248)=0.2485777026...$
With~\eqref{eqLDM}, this yields 
$$G_n(0.5)\leq 2\cdot 5^n\exp(-0.24857770256n)\leq 2\cdot 3.899546288^n,$$ 
to compare with
$G_n(0.5)\leq 2\cdot 4^n$ of the previous section. We point out that the second method with a lower bound on the gradient was hardly used by the algorithm. 
The first method is adapted when we are near the border of the set of definition of $f$, whereas the third method is adapted when we are near the optimal value.

Now we shall choose a finite strictly increasing sequence $\alpha_0=0<\alpha_1<\alpha_2<\ldots<\alpha_k=0.5$, to which  we associate
\begin{eqnarray*}
L&=&\max\{5\exp(-\sigma_3(\alpha_{i-1}))/(2\cdot 4^{1-\alpha_i}) : i \in [2, k]\} 
\end{eqnarray*}
We build the sequence $(\alpha_i)$ in the reverse order. 
So starting from $0.5$, we begin with $1000$ elements with a step of $10^{-13}$,
then sequences of $900$ elements with steps ranging from $10^{-12}$ to $10^{-5}$,
and finally $81$ with a step of $10^{-4}$, leading to $\alpha_1=0.32$,
and we complete with $\alpha_0=0$. 
With this sequence,  we are able to verify for each $i$ in $[2, k]$, as in the case $\alpha=0.5$, that with $L_0:=0.974886571911$,
\begin{eqnarray}
\sigma_3(\alpha_{i-1})\geq \log(5/2)+(\alpha_i-1)\log(4)-\log(L_0),\label{eqLO}
\end{eqnarray}
implying $L\leq L_0$. 
As before, this value is a good approximation of the true maximum, since $L$ is bounded from below by 
$5\exp(-\sigma_3)/4$, which is greater than $0.97488657191$.
Note that when $i$ is small, the algorithm needs coarser blocks than when $i$ is near $k$ (that is to say $\alpha_i$ near $0.5$), and the maximum for the definition of $L$ corresponds certainly to the index $k$. 
A way to optimize the algorithm  is then to remark that the partition used for $\alpha=0.5$ is certainly sufficient for all others $\alpha$. So instead of considering separately the different $\alpha_i$, the algorithm seeks a partition sufficient for all the $\alpha_i$ together.

For each $i\geq 2$, inequality~\eqref{eqLO} implies that 
$$5\exp(-\sigma_3(\alpha_{i-1}))\leq L_0 \cdot 2\cdot 4^{1-\alpha_i},$$
and so with \eqref{eqLDM} and the monotony on $\alpha$, for all $\alpha\in [\alpha_{i-1}, \alpha_i]$,
\begin{eqnarray}
 G_n(\alpha)\leq G_n(\alpha_{i-1})\leq 2\cdot (L_0 \cdot 2\cdot 4^{1-\alpha_i})^n\leq 2\cdot (L_0 \cdot 2\cdot 4^{1-\alpha})^n.\label{eqGn2}
\end{eqnarray}

The inequality between the first and the last member is also valid for $\alpha\in [\alpha_0, \alpha_1]$ 
since $G_n(\alpha)\leq 2\cdot 5^n$ for all $\alpha$, and $L_0\cdot 2 \cdot 4^{1-\alpha_1}>5$.
Now we use \eqref{eqGn2} in replacement of the bound in \eqref{eqGn3}, 
so in each line $2^{r+2m+1}$ becomes $2\cdot (2L_0)^{r+2m}$, yielding to 
$$p_1^e=\pcs\geq \frac{1}{16L_0^2}\geq 0.065761519632,$$ 
and theorem~\ref{thDeux} is proved.\qed

For the other dimensions, we can choose similar sequences of the $\alpha_i$'s to improve the lower bound of theorem~\ref{thUn}, the general formula for $L$ being
$$L=\max\{(2d-1)\exp(-\sigma_d(\alpha_{i-1}))/(2\cdot (2d-2)^{1-\alpha_i}):i\in  [2, k]\} .$$
In dimension $4$, we were able to obtain $\pcs\geq 0.04322$, and in dimension $5$, $\pcs\geq 0.03214$, to compare with the respective
previous values of $1/24=0.041666\ldots$ and $1/32=0.03125$.

The preceding gives improved numerically lower bounds on $\pcs$, but not in a closed form, about which we discuss in the following.
It seems plausible that with an infinitely small partition $(\alpha_i)$, particularly near $0.5$, the value of $L$ would be given for ``$\alpha_{i-1}=\alpha_i=0.5$'', 
that is 
$$L=\frac{(2d-1)\exp(-\sigma_d)}{2\sqrt{2d-2}}.$$
Assuming one could prove this value satisfies $L<1$, 
and since we still have
$$\pcs\geq \frac1{8(d-1)L^2},$$
then the following conjecture would follow :
\begin{conjecture}
For all dimension $d\geq3$, 
$$\pcs\geq \frac{\exp(2\sigma_d)}{(2d-1)^2}>\frac1{8(d-1)}.$$
\end{conjecture}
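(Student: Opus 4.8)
The plan is to show that the two heuristic ingredients flagged just before the statement can both be made rigorous. Write $g(\alpha)=\frac{(2d-1)\exp(-\sigma_d(\alpha))}{2(2d-2)^{1-\alpha}}$, so that the constant attached to a partition $\alpha_0<\dots<\alpha_k=0.5$ is $L=\max_i \frac{(2d-1)\exp(-\sigma_d(\alpha_{i-1}))}{2(2d-2)^{1-\alpha_i}}=\max_i g(\alpha_{i-1})(2d-2)^{\alpha_i-\alpha_{i-1}}$. Since $\sigma_d$ is increasing past its stationary frequency, this $L$ satisfies $L\ge g(\alpha)$ for every $\alpha$, so the uniform bound $G_n(\alpha)\le 2(2L(2d-2)^{1-\alpha})^n$ is valid and yields $\pcs\ge\frac1{8(d-1)L^2}$ as in Theorem~\ref{thUn}. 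As the mesh tends to $0$ one has $\inf_{\text{partitions}}L=\sup_{\alpha\in(0,1/2]}g(\alpha)$, and optimizing over partitions gives
\[
\pcs\ \ge\ \frac1{8(d-1)\big(\sup_{\alpha}g(\alpha)\big)^2}.
\]
Thus the conjecture is equivalent to the two assertions: (B) $\sup_{\alpha\in(0,1/2]}g(\alpha)=g(1/2)$, and (C) $g(1/2)<1$.

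For both (B) and (C) I would abandon the primal optimization over the matrices $q$ (the non-linear system the authors could not solve) and work with the Legendre-dual, spectral form $\sigma_d(\alpha)=\sup_{\theta}\big(\alpha\theta-\Lambda(\theta)\big)$, where $\Lambda(\theta)=\log\lambda(\theta)$ and $\lambda(\theta)$ is the Perron eigenvalue of the matrix $\pi^{(\theta)}$ obtained from $\pi$ by multiplying its $W_3$-column by $e^{\theta}$. Because $\sigma_d$ is convex with $\sigma_d'(\alpha)$ the optimal tilt and $\Lambda'$ increasing, the monotonicity $\sigma_d'(\alpha)\le\log(2d-2)$ that makes $g$ non-decreasing on $(0,1/2)$ — hence (B) — is equivalent to $\Lambda'(\theta_0)\ge\tfrac12$ at the distinguished tilt $\theta_0=\log(2d-2)$. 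At $\theta_0$ the matrix $\pi^{(\theta_0)}$, after clearing the factor $2d-1$, is the explicit integer matrix
\[
\widetilde M=\begin{pmatrix} 2d-3 & 1 & 2d-2\\ 2d-2 & 1 & 0\\ 2d-2 & 0 & 2d-2\end{pmatrix},
\]
and $\Lambda'(\theta_0)$ equals the $W_3$-mass $\ell_3 r_3/(\ell\cdot r)$ of its left/right Perron eigenvectors, which I would read off from the characteristic cubic.

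For (C) the same representation gives the cheap lower bound $\sigma_d(1/2)\ge\tfrac12\theta_0-\Lambda(\theta_0)$, whence $\exp(2\sigma_d)\ge\frac{(2d-2)(2d-1)^2}{\mu^2}$ with $\mu$ the Perron eigenvalue of $\widetilde M$. Since $g(1/2)<1$ is exactly $\exp(2\sigma_d)>\frac{(2d-1)^2}{8(d-1)}$, it suffices to prove $\mu<2(2d-2)$. This I would verify by evaluating the characteristic polynomial $P(\mu)=\mu^3-2(2d-2)\mu^2-\mu+(2d-2)^2+(2d-2)$ at $\mu=2(2d-2)$: one finds $P(2(2d-2))=(2d-2)(2d-3)>0$, and as the leading coefficient is positive this places the largest root strictly below $2(2d-2)$. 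This settles (C) cleanly for every $d\ge3$.

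The main obstacle is (B): the inequality $\Lambda'(\theta_0)\ge1/2$ is tight, with $\Lambda'(\theta_0)\to1/2$ as $d\to\infty$ (and $\approx0.505$ at $d=3$), so no crude estimate can work — one must control the $O(1/d)$ surplus uniformly. Concretely, writing $s=2d-2$, the eigenvectors are $r=\big(1,\tfrac{s}{\mu-1},\tfrac{s}{\mu-s}\big)$ and $\ell=\big(1,\tfrac{1}{\mu-1},\tfrac{s}{\mu-s}\big)$, so the condition $\Lambda'(\theta_0)\ge\tfrac12$ becomes the rational inequality $\frac{s^2}{(\mu-s)^2}\ge 1+\frac{s}{(\mu-1)^2}$, which must be combined with the defining relation $P(\mu)=0$ and with two-sided control on $\mu$ (in particular $\mu<2s$ from (C) together with a matching lower bound) to be reduced to a single polynomial inequality in $s$. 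Establishing that inequality for all integers $s\ge4$, and thereby closing the uniform gap, is the crux of the argument; everything else is bookkeeping around the partition limit and the convexity of $\sigma_d$.
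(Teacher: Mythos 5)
First, a point of comparison that matters: the statement you are proving is stated in the paper as a \emph{conjecture}. The paper offers no proof of it, only the two-line heuristic (assume the fine-partition limit gives $L=\frac{(2d-1)e^{-\sigma_d}}{2\sqrt{2d-2}}$, and assume this $L$ is $<1$) that you have set out to make rigorous. So your proposal is not competing with a proof in the paper; it is an attempt to close the paper's own admitted gap. Within that attempt, your reduction of the conjecture to (B) $\sup_{\alpha}g(\alpha)=g(1/2)$ and (C) $g(1/2)<1$ is correct, and your move to the Legendre--dual, spectral form of $\sigma_d$ is a genuine advance over the paper's primal entropy minimization (the non-linear system the author says he could not solve). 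The tilted matrix $\widetilde M$, the eigenvector formulas $r=\bigl(1,\tfrac{s}{\mu-1},\tfrac{s}{\mu-s}\bigr)$, $\ell=\bigl(1,\tfrac{1}{\mu-1},\tfrac{s}{\mu-s}\bigr)$, and the evaluation $P(2s)=s(s-1)$ all check out, so (C) does follow once one knows $\mu<2s$. One repair is needed there: $P(2s)>0$ together with a positive leading coefficient does \emph{not} by itself place the largest root below $2s$ (the evaluation point could lie in the middle interval on which a cubic is positive). The clean argument is Perron--Frobenius: $\widetilde M$ is irreducible with row sums $2s$, $s+1$, $2s$, not all equal, so its spectral radius is strictly less than the maximal row sum $2s$; this makes the polynomial evaluation unnecessary.

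The genuine gap is (B), and you say so yourself: the inequality $\Lambda'(\theta_0)\geq\tfrac12$ at $\theta_0=\log(2d-2)$, equivalently $\frac{s^2}{(\mu-s)^2}\geq 1+\frac{s}{(\mu-1)^2}$, is flagged as ``the crux'' and left unproven. But this is exactly the first of the two assumptions the paper itself declines to prove (that the fine-partition limit of $L$ is attained at $\alpha=1/2$), and it is the harder one, precisely because it is tight: from $P(\mu)=0$ one gets $\mu=2s-\tfrac14+O(1/s)$, so the inequality holds only with margin of order $\tfrac1{4s}$, consistent with your value $\Lambda'(\theta_0)\approx 0.505$ at $d=3$. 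As it stands, then, your proposal is a reduction of the conjecture to an explicit polynomial inequality in the single variable $s=2d-2$ --- valuable, plausibly completable, but not a proof. Two further pieces of bookkeeping would also have to be written out if (B) is settled: the passage $\inf_{\mathrm{partitions}}L=\sup_\alpha g(\alpha)$ must retain the paper's separate treatment of small $\alpha$ (the bound~\eqref{eqLDM} degenerates below the stationary frequency $1/(2d)$ of $W_3$, where the correct rate is $\inf_{\beta\geq\alpha}\sigma_d(\beta)=0$, so the trivial bound $G_n(\alpha)\leq 2(2d-1)^n$ must still dominate there, as in the paper's condition $L_0\cdot 2\cdot 4^{1-\alpha_1}>5$); and the limit must be taken through partitions for which the constant in front of the exponential bound on $\E(N_p(r))$ stays under control, exactly as in the proof of Theorem~\ref{thUn}.
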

The values obtained in dimensions $3$ to $5$ seem to indicate that the two members on the middle and on the right 
may be asymptotically equivalent.

\section*{Acknowledgements}
The author thanks Cyril Roberto for his help and for his useful suggestions and comments.

\bibliographystyle{plain}
\bibliography{perso}
\end{document}